      \newcommand {\al}   {\alpha}          \newcommand {\bt}  {\beta}
                 \newcommand {\vphi} {\varphi}
      \newcommand {\lam}  {\lambda}
      \newcommand {\pl}   {\partial}
      \newcommand {\RRR}  {{\mathbb R}}
     \newcommand {\beq}  {\begin{equation}}
      \newcommand {\eeq}  {\end{equation}}
      \newtheorem{theorem}{Theorem}
      \newtheorem{lemma}{Lemma}
      \newtheorem{predl}{Proposition}
      \newtheorem{primer}{Example}
      \newtheorem{corollary}{Corollary}
     \newtheorem{question}{Question}
\title{Billiard transformations of parallel flows: a periscope theorem}
\author{Alexander Plakhov\thanks{Center for R\&{}D in Mathematics and Applications, Department of Mathematics, University of Aveiro, Portugal, and Institute for Information Transmission Problems, Moscow, Russia.} \and Sergei Tabachnikov\thanks{Department of Mathematics, Penn  State University.} \and  Dmitry Treschev\thanks{Steklov Mathematical Institute of Russian Academy of Science and Lomonosov Moscow State University.}}
\date{}
\begin{document}

\maketitle

\begin{abstract}
We consider the following problem: given two parallel and identically oriented bundles of light rays in $\RRR^{n+1}$ and given a diffeomorphism between the rays of the former bundle and the rays of the latter one, is it possible to realize this diffeomorphism by means of several mirror reflections? We prove that a 2-mirror realization is possible if and only if the diffeomorphism is the gradient of a function. We further prove that any orientation reversing diffeomorphism of domains in $\RRR^2$ is locally the composition of two gradient diffeomorphisms, and therefore can be realized by 4 mirror reflections of light rays in $\RRR^3$, while an orientation preserving diffeomorphism can be realized by 6 reflections. In general, we prove that an (orientation reversing or preserving) diffeomorphism of wave fronts of two normal families of light rays in $\RRR^3$ can be realized by 6 or 7 reflections.

\end{abstract}

\begin{quote}
{\small {\bf Mathematics subject classifications:} Primary 49Q10, 49K30}
\end{quote}

\begin{quote}
{\small {\bf Key words and phrases:} billiards, freeform surfaces, geometrical optics, imaging
}
\end{quote}

\section{Introduction}

This paper concerns geometrical optics, a classical subject that goes all the way back to Fermat, Huygens,  Newton, and that remains a research area of a considerable contemporary interest.

One of the reasons for this interest is that, fairly recently, industrial methods were developed for manufacturing freeform reflective surfaces of optical quality.

From the mathematical point of view, a freeform mirror is a smooth hypersurface in Euclidean space from which the rays of light reflect according to the familiar law ``the angle of incidence equals the angle of reflection". We refer to \cite{Koz} for a historical introduction to geometrical optic and to \cite{Bo} for an encyclopedic study of the subject. We cannot help mentioning a classical source, the treatise by W.R. Hamilton \cite{Ha}.

Another reason for the popularity of geometrical optics is its close relation with the ever-growing study of mathematical billiards; the reader interested  in billiards is referred to \cite{K-T,Pl,Ta}. Mathematical billiards describe the motion of a mass-point inside a domain with the elastic reflection off the boundary given by the  law of equal angles.

Still another reason for which geometrical optics continues to attract attention is that the space of oriented lines, i.e., rays of light, in $\RRR^{n+1}$ is an example of a symplectic manifold (symplectomorphic to the cotangent bundle $T^* S^n$). The optical, or billiard, reflection in a mirror defines a  symplectic transformation of the space of lines; see \cite{Ar,Ta} for a modern treatment.

Only a negligible part of symplectic transformations of the space of lines is realized by the composition of reflections in mirrors. Indeed, a symplectic transformation of the space of rays in $\RRR^{n+1}$ is given by its generating function, a function of $2n$ variables, whereas a mirror is locally the graph of a function of only $n$ variables. It is an interesting and, to the best of our knowledge,  completely open  problem to characterize the symplectic transformations of the space of oriented lines that arise as consecutive mirror reflections.

A common object of study in geometrical optics is a normal family of lines in $\RRR^{n+1}$, that is,  an $n$-parameter family of oriented lines perpendicular to a hypersurface (think of this surface as emanating light).
The hypersurface is called a wave front. A normal family has a one-parameter family of wave fronts; they are equidistant from each other.

From the symplectic point of view, the normal families are Lagrangian submanifolds in the symplectic space of rays. Since an  optical reflection is a symplectic transformation, normal  families are transformed to normal families. This is the classic Malus theorem; see \cite{Ma} for a modern account.

Conversely, given two generic local normal families consisting of the outgoing and the incoming rays, there is a one-parameter family of mirrors that reflect one family to the other. This is Levi-Civita's theorem  \cite{LC}. The mirrors are the loci of points for which  the sum of distances to the respective wave fronts is constant.

For example, in dimension two, if the two normal families consist of lines thought points $A$ and $B$, then the respective mirrors are the ellipses with the foci $A$ and $B$. Identify the circles centered at $A$ and $B$ with the projective line via stereographic projections. Then  the respective  mappings of the normal families are M\"obius transformations, see Appendix and \cite{Fr}.

In general, it is an interesting open problem to describe the one-parameter family of mappings of  normal families given by a one-mirror reflection. Another problem, motivated by applications, is as follows.

Consider a diffeomorphism of two normal families of rays in $\RRR^{n+1}$. We wish to realize this diffeomorphism as the composition of a number of mirror reflections. In particular, what is the least number of mirrors needed?

In this paper we consider a particular case of this problem: the two normal families consist of two parallel and identically oriented bundles of rays in $\RRR^{n+1}$. We think of a system of mirrors that takes a parallel beam to a parallel beam as a periscope.

In Section \ref{sec:2-mirror}, we show that the diffeomorphism of parallel beams realized by a two-mirror reflection is a gradient diffeomorphism and, conversely,  if the diffeomorphism is gradient, it can be realized by two mirrors. We describe the mirrors explicitly: they form a 2-parameter family determined by the diffeomorphism.

In Section \ref{sec:mirr_trans}, we consider the case $n=2$. Then we have a diffeomorphism between two compact domains in $\RRR^2$. We show that if the diffeomorphism is orientation reversing then it is a composition of two gradient diffeomorphisms, and hence it can be realized by a four-mirror reflection. We present an example of an orientation preserving diffeomorphism that is not the composition of two gradient diffeomorphisms. We show that orientation preserving diffeomorphisms can be realized by six-mirror reflections. As a consequence, a diffeomorphism between two normal families of rays in $\RRR^3$ can be realized by an at most seven-mirror reflection.

In the last Section \ref{problems}, we present a collection of open problems on realization of diffeomorphisms by mirror reflections.

The literature on freeform mirrors is substantial, and we mention here but a few  relevant papers.

The papers \cite{O,GO} concern two-reflector systems that transform an incoming planar wave front in Euclidean space into outgoing planar wave front with a prescribed output intensity. This problem is formulated and solved as a mass transfer problem. The observation that a two-mirror reflection defines a gradient diffeomorphism, which is a part of our Theorem \ref{t1}, is made in these papers: equations (4.43) in \cite{O} and (2.3) in \cite{GO}.

The paper \cite{CH} concerns mirror realizations of a mapping of a 2-parameter family of rays in $\RRR^3$ to another such family; the families are not necessarily normal. Using the Cartan-K\"ahler theorem in the theory of exterior differential systems, a numerical method is described for constructing four mirrors that realize the mapping (the presented examples involve only normal families of rays).

The recent paper \cite{Pe} concerns a version of the question which symplectic transformations of the space of oriented lines are realized by consecutive mirror reflections. Consider a double-mirror system consisting of two infinitesimally close mirrors (thin film). A ray of light goes through the first mirror, reflects in the second one, then reflects in the first one, and escapes by going through the second mirror. This defines an infinitesimal symplectic transformation of the space of rays, that is, a Hamiltonian vector field. These Hamiltonian vector fields are described in \cite{Pe} in terms of the geometry of the thin film.

Another application of mirror transformations of normal families is related to the phenomenon of invisibility, where the light rays go round a certain domain in Euclidean space (called an {\it invisible body}), while the corresponding transformation of a wave front is the identity (that is, the light rays are not preserved as a result of mirror reflections). A review of results on billiard invisibility can be found in chapter 8 of \cite{Pl}; see also the recent paper \cite{Pl_arX} where a 2D body invisible for arbitrarily many parallel flows is constructed.

Finally, let us mention another problem of mirror design presented in \cite{Tre_PhysD,Tre_Proc}, where the existence of billiard tables with locally linearizable dynamics is studied.

\section{2-mirror transformations}
\label{sec:2-mirror}

We are concerned with the following question: given two parallel bundles of light rays in the same direction and given a (smooth) one-to-one correspondence between the rays of the former bundle and the rays of the latter one, is it possible to realize this correspondence by means of several mirror reflections? If the answer is yes, what is the minimum number of mirrors and/or mirror reflections needed?

More precisely, choose an orthonormal coordinate system $x_1, \ldots, x_n,\, z$ in Euclidean space $\RRR^{n+1}$ and consider two bounded domains $D_1$ and $D_2$ in $\RRR^n$ and a diffeomorphism $f : D_1 \to D_2$. Consider two bundles of rays codirectional with the $z$-axis, with the cross sections $D_1$ and $D_2$. The light rays of the former bundle are naturally labeled by $x = (x_1, \ldots, x_n) \in D_1$, and the rays of the latter one by $x \in D_2$. The problem is to find a finite collection of hypersurfaces $\{ S_i \}$ so that, for all $x \in D_1$, the light ray of the former bundle labeled by $x$, after several reflections from the surfaces, is transformed into the ray of the latter bundle labeled by $f(x)$.

In terms of billiard dynamics, we are dealing with the billiard in $\RRR^{n+1} \setminus (\cup_i S_i)$. The desirable dynamics is the following. A particle from the former bundle, whose position for small values of time $t$ is $(x,t)$,\, $x \in D_1$, makes several consecutive reflections off the surfaces $S_i$ and finally becomes a part of the latter bundle and has the position $(f(x), t - \tau(x))$ for $t$ sufficiently large.

Notice that the time shift $\tau(x)$ is the same for all particles, $\tau(x) = c$, and depends only on the choice of the mirrors. This is a well known fact of geometrical optics: ``the optical path length between two wave fronts is the same for all rays", see, e.g., \cite{Bo}, Section 3.3.3 and Appendix.

Consider a couple of simple examples. First, the translation by a vector $a \in \RRR^n$,\, $f : x \mapsto x + a$ (where $D_2 = D_1 + a$ and $D_1 \cap D_2 = \emptyset$) can be realized by two plane (and parallel) mirrors. Second, a dilation $f : x \mapsto kx$ (where $k \ne 0,\, 1$ and $D_2 = k D_1$,\, $D_1 \cap D_2 = \emptyset$) can be realized by two mirrors which are pieces of paraboloids of rotation, $z = (|x|^2 - c^2)/(2c)$ and $z = (|x|^2 - c^2 k^2)/(2ck)$, where $c$ is an arbitrary positive parameter.

It is convenient to use the auxiliary function $g(x) = f(x) - x$ defined on $D_1$; then the diffeomorphism takes the form $x \mapsto x + g(x)$.

The first question we are addressing is: what diffeomorphisms $x \mapsto x + g(x)$ can be realized by reflections off only two mirrors? (Notice that the two corresponding mirrors should be graphs of functions, say $\Phi_1 : D_1 \to \RRR$ and $\Phi_2 : D_2 \to \RRR$.) The answer is given by the following theorem.

\begin{theorem}[Periscope Theorem]\label{t1}
(a) If $g$ is realized by a 2-mirror system, then $g = \nabla G$ for a smooth function $G : D_1 \to \RRR$. Moreover, one has $G(x) = c\Phi_1(x)$, where the function $\Phi_1$ defines the first mirror and $c$ is a positive constant.

(b) If $D_1$ and $D_2$ are convex and disjoint and $g = \nabla G$ for a smooth function $G : D_1 \to \RRR$, then $g$ can be realized by infinitely many 2-mirror systems. These systems form a two-parameter family, with the first mirror in each system being the graph of a function $\Phi_1(x) = \frac 1c\, G(x) + h$, with arbitrary $h$ and with the parameter $c > 0$ sufficiently large. The parameter $h$ determines the ``height" of the 2-mirror system.
\end{theorem}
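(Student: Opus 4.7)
For part (a) I would set up coordinates so that the incoming and outgoing rays are vertical, and track a single ray through its two reflections. The ray enters at $(x,\Phi_1(x))$ moving in direction $(0,\ldots,0,1)$, reflects to some intermediate direction $v_1$, travels a distance $s(x)$ to the point $(f(x),\Phi_2(f(x)))$ on mirror $2$, and reflects back to $(0,\ldots,0,1)$. The composition of two hyperplane reflections is a rotation in the $2$-plane spanned by the two unit normals (and identity on its orthogonal complement); since both graph normals $(-\nabla\Phi_i,1)/\sqrt{1+|\nabla\Phi_i|^2}$ have nonzero vertical component, this $2$-plane is never horizontal, so the rotation can fix the vertical direction only when it is trivial. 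Hence $\nabla\Phi_1(x)=\nabla\Phi_2(f(x))$. The explicit reflection formula then gives $g(x)=f(x)-x=\tfrac{2s(x)}{1+|\nabla\Phi_1(x)|^2}\,\nabla\Phi_1(x)$, so $g$ is already parallel to $\nabla\Phi_1$ at every point.

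To show that the scalar factor is in fact a constant, I would apply Malus' theorem (constancy of optical path length) between the two horizontal wave fronts lying below and above the device. Writing this constant as $\Phi_1(x)+s(x)-\Phi_2(f(x))$ (up to an overall shift depending only on the two chosen wave fronts) and substituting the reflection formulas for $s(x)$ and $\Phi_2(f(x))-\Phi_1(x)$ in terms of $|g(x)|$ and $|\nabla\Phi_1(x)|$, a short algebraic simplification collapses this combination to $|g(x)|/|\nabla\Phi_1(x)|$. Calling this constant $c>0$ and combining with the parallelism just proved yields $g=c\,\nabla\Phi_1$, so $g=\nabla G$ with $G=c\,\Phi_1$.

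For part (b) I would reverse the construction. Given a potential $G$ with $g=\nabla G$, set $\Phi_1(x):=G(x)/c+h$, so that $\nabla\Phi_1=g/c$, and define the height of the second mirror along the prescribed ray by $\Phi_2(f(x)):=\Phi_1(x)+\tfrac{|g(x)|^2-c^2}{2c}$; this is well-defined as a function on $D_2$ since $f$ is a diffeomorphism. The essential check is that the second reflection indeed returns the ray to the vertical, i.e.\ that $\nabla\Phi_2(f(x))=\nabla\Phi_1(x)$. Differentiating $\Phi_2\circ f$ by the chain rule and substituting $Df=I+Dg$, the symmetry of $Dg=c^{-1}D^2 G$---which is exactly the hypothesis that $g$ be a gradient---reduces the verification to the algebraic identity $(Df)^T\nabla\Phi_2(y)=(Df)^T g(x)/c$, from which invertibility of $Df$ concludes.

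The main technical obstacle is ensuring a geometrically valid billiard trajectory: the intermediate segment must meet the two mirrors only at its endpoints, and the vertical legs must meet the respective mirrors only once. Disjointness of $D_1$ and $D_2$ (with the two graphs sitting over disjoint columns of $\RRR^{n+1}$) handles the vertical legs automatically. For the intermediate segment, as $c\to\infty$ one has $|\nabla\Phi_1|=|g|/c\to 0$, so the reflected direction approaches $(0,\ldots,0,-1)$ and mirror $2$ sits at height roughly $h-c/2$ below mirror $1$; the horizontal projection of the intermediate segment is the straight segment from $x\in D_1$ to $f(x)\in D_2$ (two convex disjoint sets), and a uniform estimate using compactness of $\overline{D_1}$ and $\overline{D_2}$ shows the ray clears mirror $1$ and arrives cleanly at mirror $2$ once $c$ is sufficiently large. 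The free parameters are thus $c$ (required large) and $h$ (free), giving the advertised two-parameter family of realizations.
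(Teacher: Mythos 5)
Your proposal is correct and follows essentially the same route as the paper: the reflection computation gives $g(x)=c(x)\,\nabla\Phi_1(x)$, constancy of the optical path length between horizontal wave fronts forces $c(x)\equiv c$ (so $G=c\Phi_1$), and in part (b) you use the same mirrors $\Phi_1=\frac1c G+h$ and $\Phi_2(f(x))=\Phi_1(x)+\frac{|g(x)|^2-c^2}{2c}$ together with the same large-$c$ steepness/convexity/compactness argument to exclude spurious intersections. The only real deviation is that you verify the billiard law at the second mirror by the chain-rule identity $\nabla(\Phi_2\circ f)=(Df)^T\,\nabla\Phi_2\circ f=\frac1c (Df)^T g$, hence $\nabla\Phi_2(f(x))=\nabla\Phi_1(x)$, where the paper instead appeals to preservation of the optical length to get the reflection law at $A_2$; your computation is a correct and somewhat more explicit substitute (note only that the gradient hypothesis enters there through $\nabla\Phi_1=g/c$ rather than through the symmetry of $Dg$, and that the one-line check that $A_2$ lies on the ray reflected at $A_1$, i.e.\ that $(g(x),\frac{|g(x)|^2-c^2}{2c})$ is parallel to the reflected direction, is implicit in your choice of $\Phi_2$).
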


\begin{proof}
(a) Let $g$ be realized by the mirrors graph$(\Phi_1)$ and graph$(\Phi_2)$. A particle labeled by $x \in D_1$ is reflected at the points $A_1 = (x, \Phi_1(x))$ and
\beq\label{A_2}
A_2 = (x + g(x), \Phi_2(x + g(x)))
\eeq
(see Fig.~\ref{fig_tr2}).
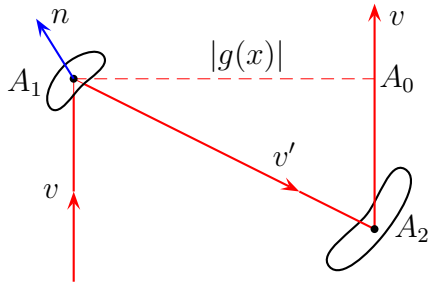
\begin{figure}[h]
\begin{picture}(0,120)
\rput(5.67,-0.1){
\scalebox{1}{
\psline[linewidth=0.3pt,linecolor=red,linestyle=dashed](0,3)(4,3)
\psecurve(4.1,0.9)(3.4,0.5)(3.9,1.2)(4.4,1.8)(4.1,0.9)(3.4,0.5)(3.9,1.2)
\psline[linewidth=0.8pt,linecolor=red,arrows=->,arrowscale=1.6](0,0.3)(0,1.5)
\psline[linewidth=0.8pt,linecolor=red](0,1.5)(0,2.8)
\psline[linewidth=0.3pt,linecolor=red](0,2.8)(0,3)(0.14,2.93)
\psline[linewidth=0.8pt,linecolor=red,arrows=->,arrowscale=1.6](0.14,2.93)(3,1.5)
\psline[linewidth=0.8pt,linecolor=red,arrows=->,arrowscale=1.6](3,1.5)(4,1)(4,4)
\psecurve(0.2,2.8)(-0.3,2.6)(-0.1,3.2)(0.4,3.3)(0.1,2.9)(-0.3,2.6)(-0.1,3.2)
\rput(-0.3,1.5){$v$}
\rput(4.3,3.8){$v$}
\rput(2.8,2){$v'$}
\psdots[dotsize=3pt](0,3)(4,1)
\rput(-0.65,2.95){$A_1$}
\rput(4.5,1){$A_2$}
\psline[linecolor=blue,arrows=->,arrowscale=1.6](0,3)(-0.5,3.8)
\rput(-0.18,3.83){$n$}
\rput(2.3,3.3){$|g(x)|$}
\rput(4.3,3){$A_0$}
}
}
\end{picture}
\caption{An individual trajectory in more detail. The points of reflection are $A_1$ and $A_2.$ We also mark the point $A_0$ on the trajectory, so that $A_0A_1A_2$ is a right triangle.}
\label{fig_tr2}
\end{figure}
The unit normals to graph$(\Phi_1)$ at $A_1$ are $\pm n/|n|$, where $n = (-\nabla\Phi_1(x),\, 1)$.
The velocities of the particle before and after the first reflection are $v = (\bar 0, 1)$ and
$$
v' = v - 2\langle v, n \rangle \frac{n}{|n|^2} = \frac{(2\nabla\Phi_1(x),\, -1 + |\nabla\Phi_1(x)|^2)}{1 + |\nabla\Phi_1(x)|^2},
$$
where $\langle \cdot\,, \cdot \rangle$ is the scalar product. Thus, the point of second reflection can be written as
\beq\label{A2}
A_2 = (x, \Phi_1(x)) + c \Big( \nabla\Phi_1(x),\ \frac{-1 + |\nabla\Phi_1(x)|^2}{2} \Big),
\eeq
where $c = c(x) > 0$ generally depends on $x$. Comparing (\ref{A_2}) and (\ref{A2}), we conclude that $g(x) = c\nabla\Phi_1(x)$.

By (\ref{A2}), the slope (the tangent of the inclination angle) of the intermediate segment $A_1A_2$ is
\beq\label{tan}
\frac{-c^2 + |g(x)|^2}{2c|g(x)|}.
\eeq

Consider the point $A_0 = (x + c\nabla\Phi_1(x),\, \Phi_1(x))$ on the trajectory of the particle (see Fig.~\ref{fig_tr2}). One easily calculates the sides of the right triangle $A_0A_1A_2$,
$$
|A_0A_1| = c|\nabla\Phi_1(x)|, \quad |A_1A_2| = c\, \frac{1 + |\nabla\Phi_1(x)|^2}{2}, \quad \pm |A_0A_2| = c\, \frac{1 - |\nabla\Phi_1(x)|^2}{2},
$$
where the sign "+" or "$-$" in the latter equation is taken according as $A_2$ lies below or above $A_0$.

It remains to notice that the ``length" of the trajectories is preserved in the flow; that is, the difference $|A_1A_2| \pm |A_0A_2| = c$ is constant.

Note also that the function $\Phi_2$ can easily be determined from equation (\ref{A2}),
\beq\label{Psi}
\Phi_2(x + g(x)) = \Phi_1(x) +\frac{|g(x)|^2 - c^2}{2c}.
\eeq

(b) Suppose that $g = \nabla G$. Let the two mirrors be the graphs of the function $\Phi_1(x) = \frac 1c\, G(x)+ h$ and of the function $\Phi_2$ determined by equation (\ref{Psi}). This equation ensures preservation of the ``length" in the family of infinite polygonal lines with the vertices at $A_1 = (x,\Phi_1(x))$ and $A_2 = (x+g(x),\Phi_2(x+g(x))$. The first and the third segments of each polygonal line are ``vertical" (parallel to $v = (\bar 0, 1)$) half-lines, and are defined, respectively, by $A_1 + \lam v,\; \lam \le 0$ and $A_2 + \lam v, \; \lam \ge 0$.

Let us show that each polygonal line is actually a billiard trajectory. Indeed, one obviously has billiard reflection at $A_1$, and the ``length" preservation condition ensures that the reflection at $A_2$ also obeys the billiard law. Now we have to check that the line has no points of intersection, other than $A_1$ and $A_2$,
with the mirrors. Indeed, the first and the third segments obviously do not have such points (since $D_1$ and $D_2$ are disjoint). It remains to show that the intermediate segment $A_1A_2$ has no interior points of intersection with graph$(\Phi_1)$ and graph$(\Phi_2)$.

The slope of the line $A_1A_2$ is given by (\ref{tan}); for $c$ sufficiently large and all $x \in D_1$, it is smaller than $-{|g(x)|}/{c} = -|\nabla\Phi_1(x)|$. Now draw the ``vertical" 2D plane through $A_1A_2$; it is parallel to the vectors $(g(x), 0)$ and $v = (\bar{0}, 1)$. The section of graph$(\Phi_1)$ by this plane is the graph of a function of one variable defined on a segment (since $D_1$ is convex), with the modulus of derivative everywhere smaller than or equal to $|\nabla\Phi_1(x)|.$ On the other hand, the segment $A_1A_2$ is the graph (in the same plane) of a linear function with the modulus of derivative greater than $\max_x |\nabla\Phi_1(x)|.$ Therefore the only point of intersection of the two graphs is $A_1$.

Repeating this argument, one concludes that the only point of intersection of the segment $A_1A_2$ with graph$(\Phi_2)$ is $A_2$.
\end{proof}

Let us mention a connection of the above computations with the Legendre transform. The next Proposition is equivalent to the result in Section 4.4.2 of \cite{O}.

Let $y$, $\Psi_1$, and $\Psi_2$ be defined by the equations
$$
  y = -x -g(x), \
  \Psi_1(x) = -x^2/2 + c^2/4 - c\Phi_1(x), \   \Psi_2(y) = -y^2/2 + c^2/4 + c\Phi_2(-y).
$$

\begin{predl}
The functions $\Psi_1$ and $\Psi_2$ are related by the Legendre transform:
$$
  y = \partial\Psi_1(x) / \partial x, \quad
  \Psi_2(y) = \langle x,y\rangle - \Psi_1(x).
$$
\end{predl}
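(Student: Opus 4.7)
The plan is to verify both identities by direct computation, using the two relations established in the proof of Theorem~\ref{t1}(a), namely
$$
g(x) = c\,\nabla\Phi_1(x), \qquad \Phi_2(x+g(x)) = \Phi_1(x) + \frac{|g(x)|^2 - c^2}{2c}.
$$
Together with the substitution $-y = x + g(x)$, these are all the ingredients needed; the proposition reduces to algebra.

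For the first identity, I would simply differentiate the definition of $\Psi_1$. Since $\Psi_1(x) = -|x|^2/2 + c^2/4 - c\Phi_1(x)$, we get $\partial\Psi_1/\partial x = -x - c\nabla\Phi_1(x)$, which by the relation $g = c\nabla\Phi_1$ equals $-x - g(x) = y$. This is one line and uses only part (a) of the Periscope Theorem.

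For the second identity, the plan is to compute $\Psi_2(y)$ and $\langle x,y\rangle - \Psi_1(x)$ separately in terms of $x$, $g(x)$, $\Phi_1(x)$, and $c$, and observe they coincide. On the one hand,
$$
\langle x,y\rangle - \Psi_1(x) = -|x|^2 - \langle x,g(x)\rangle + \tfrac{1}{2}|x|^2 - \tfrac{c^2}{4} + c\Phi_1(x) = -\tfrac{1}{2}|x|^2 - \langle x,g(x)\rangle - \tfrac{c^2}{4} + c\Phi_1(x).
$$
On the other hand, expanding $-|y|^2/2 = -|x+g(x)|^2/2$ and substituting the formula for $\Phi_2(x+g(x)) = \Phi_2(-y)$ from equation~(\ref{Psi}) into $\Psi_2(y) = -|y|^2/2 + c^2/4 + c\Phi_2(-y)$, the $|g(x)|^2/2$ and $-c^2/2$ terms from the two pieces combine with $c^2/4$ to produce exactly the same expression. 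Equality then follows.

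The only ``obstacle'' is sign bookkeeping: one has to be careful that the sign $-y = x+g(x)$ in the definition of $\Psi_2$ correctly absorbs the sign difference between $-c\Phi_1$ and $+c\Phi_2$ in the definitions of $\Psi_1$ and $\Psi_2$. No geometric input beyond Theorem~\ref{t1}(a) and equation~(\ref{Psi}) is needed, so the proof is a short verification.
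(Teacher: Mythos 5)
Your computation is correct and is exactly the ``straightforward computation'' that the paper leaves to the reader: differentiating $\Psi_1$ gives $y=-x-c\nabla\Phi_1(x)=-x-g(x)$ via $g=c\nabla\Phi_1$, and substituting (\ref{Psi}) into $\Psi_2(y)=-|y|^2/2+c^2/4+c\Phi_2(-y)$ with $-y=x+g(x)$ makes both sides equal $-\tfrac12|x|^2-\langle x,g(x)\rangle-\tfrac{c^2}{4}+c\Phi_1(x)$. Since the paper supplies no details beyond declaring the proof a direct verification, your argument coincides with the intended one.
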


The proof is a straightforward computation. \qed
\medskip

Let us now generalize claim (b) of Theorem \ref{t1} to the case of a {\it piecewise smooth} and piecewise gradient mapping $f$. This generalization will be used in the next section where we study systems with 4 reflections.

It is straightforward to realize such a mapping $f$ by two collections of mirrors and by a bundle of polygonal lines that have billiard reflections from these mirrors. The main difficulty, however, is to prove that the polygonal lines do not have superfluous intersections with the mirrors, and thus are {\it true} billiard trajectories. The additional assumptions described below are needed to ensure this non-intersection condition.

Let $D_1$ be the union of finitely many closed domains with disjoint interiors, $D_1 = \cup_{i=1}^m N_i,\; N_i^{\circ} \cap N_j^\circ = \emptyset$ for $i \ne j$, and consider a mapping $f : D_1 \to \RRR^n$ such that the restriction $f\rfloor_{N_i}$ for each $i$ defines a diffeomorphism between $N_i$ and $f(N_i)$, and the interiors of the images $f(N_i),\; i = 1,\ldots,m$ are mutually disjoint and do not intersect $D_1$. We take $D_2 = \cup_{i=1}^m f(N_i)$. As above, define the mapping $g$ by $g(x) = f(x) - x$.

Let us additionally assume that the restriction of $g$ on $N_i$ is the gradient of a smooth function $G_i : N_i \to \RRR$,\, $g_i := g\rfloor_{N_i} = \nabla G_i$. Further, assume that each function $G_i$ can be extended to a smooth function $\tilde G_i$ defined on a larger domain $\tilde N_i \supset N_i$ and such that the mapping $\tilde f_i$ defined by $\tilde f_i(x) = x + \nabla \tilde G_i(x)$ is a diffeomorphism from $\tilde N_i$ to $\tilde f_i(\tilde N_i)$ and, moreover, $\tilde N_i$ and $\tilde f_i(\tilde N_i)$ are convex and their interiors are disjoint, $\tilde N_i^{\circ} \cap \tilde f_i(\tilde N_i)^{\circ} = \emptyset$.

Note that the interiors of $\tilde N_i,\; i = 1,\ldots,m$ do not need to be disjoint.

\begin{predl}\label{cor}
Let the above assumptions be satisfied. Then $f$ can be realized by reflections from a finite collection of smooth mirrors, where each light ray makes exactly 2 reflections.
\end{predl}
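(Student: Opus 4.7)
My plan is to assemble the mirror system for $f$ piece by piece: apply Theorem~\ref{t1}(b) separately to each extended diffeomorphism $\tilde f_i$ on the convex domain $\tilde N_i$, restrict the two resulting mirrors to $N_i$ and $f(N_i)$, and take the union of all these restricted pieces over $i=1,\ldots,m$. The whole difficulty will then lie in choosing the free parameters of the individual subsystems so that no light ray intended for one subsystem is deflected by mirror pieces belonging to another.

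Since there are only finitely many pieces, I can fix one constant $c>0$ large enough that the hypothesis of Theorem~\ref{t1}(b) holds simultaneously for every $\tilde f_i$. For each choice of heights $h_1,\ldots,h_m\in\RRR$, Theorem~\ref{t1}(b) then furnishes a 2-mirror realization of $\tilde f_i$ whose first mirror is the graph of $\tilde\Phi_1^i(x)=\tilde G_i(x)/c+h_i$ on $\tilde N_i$ and whose second mirror is the graph of a function $\tilde\Phi_2^i$ on $\tilde f_i(\tilde N_i)$ determined by the length-preservation relation (\ref{Psi}); I take as candidate mirrors the restrictions $\Phi_1^i:=\tilde\Phi_1^i|_{N_i}$ and $\Phi_2^i:=\tilde\Phi_2^i|_{f(N_i)}$. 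For every $x\in N_i^\circ$ the intended broken line through $A_1^i=(x,\Phi_1^i(x))$ and $A_2^i=(f(x),\Phi_2^i(f(x)))$ automatically avoids spurious intersections with the home pieces $\Phi_1^i,\Phi_2^i$ by Theorem~\ref{t1}(b) on $\tilde N_i$; and its two vertical portions avoid every foreign piece because their ground projections $\{x\}\subset N_i^\circ\subset D_1$ and $\{f(x)\}\subset f(N_i)^\circ$ are, by the hypotheses, disjoint from each $N_j^\circ$ and each $f(N_j)^\circ$ for $j\ne i$, and from $D_1$ in the case of $\{f(x)\}$.

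The main obstacle, and the only one left, is to prevent the slanted intermediate segment $A_1^iA_2^i$ from hitting some foreign $\Phi_1^j$ or $\Phi_2^j$ when the segment's ground projection crosses $N_j$ or $f(N_j)$. I will defeat this by a vertical-stacking trick: for the chosen $c$, all values $\Phi_1^i(x)$ lie within $O(1/c)$ of $h_i$ and, by (\ref{Psi}), all values $\Phi_2^i(y)$ lie within $O(1/c)$ of $h_i-c/2$, so the intermediate segment of system $i$ is confined to the ``slab'' $[h_i-c/2,h_i]$ up to $O(1/c)$. Setting $h_i=Mi$ with $M\gg c$ makes these slabs pairwise disjoint across different $i$, and therefore no height at which a foreign mirror piece sits can be reached by an intermediate segment of system $i$. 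This is precisely what the two-parameter freedom in Theorem~\ref{t1}(b) is for, and with this choice of parameters the combined collection is a valid 2-reflection realization of $f$.
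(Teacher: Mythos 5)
Your proposal is correct and takes essentially the same approach as the paper: a separate 2-mirror system for each $N_i$ built from Theorem~\ref{t1}(b) applied to the convex extensions $\tilde N_i$, $\tilde f_i(\tilde N_i)$, disjointness of the bases to handle the vertical segments, and vertical separation of the systems through the choice of the heights $h_i$ to keep intermediate segments away from foreign mirrors. Your explicit slab estimate with $h_i = Mi$, $M \gg c$, is just a quantitative rendering of the paper's inductive choice of placing each pair of mirrors below all previous pairs.
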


\begin{proof}
The collection of mirrors we are looking for is a finite collection of 2-mirror systems. The first mirror of  $i$th system is the graph of a function $\Phi_1^i = \frac{1}{c_i}\, G_i + h_i$ defined on $N_i$, while the second one is the graph of the function
\beq\label{psi2}
\Phi_2^i(x + g_i(x)) = \Phi_1^i(x) +\frac{|g_i(x)|^2 - c_i^2}{2c_i}
\eeq
defined on $f(N_i)$ (compare with formula (\ref{Psi})).

These mirrors define a family of 3-segment polygonal lines having billiard reflections from the mirrors at the respective points. It remains to properly choose the parameters $c_i$ and $h_i$ to guarantee  that these polygonal lines do not have superfluous intersections with the mirrors, and therefore are {\it true} billiard trajectories.

Since the interiors of the sets $N_i$ and $f(N_i),\; i = 1,\ldots,m$ are mutually disjoint, one concludes that the first and the last segments do not have superfluous (others than at their endpoints) intersections. It remains to check the intermediate segments. Below we prove that the intermediate segments of  $i$th system do not have irrelevant intersections (a) with the mirrors of the same system and (b) with the mirrors of the other systems.

(a) By the above assumption we can extend the two mirrors in  $i$th system. The extended first mirror is the graph of the function $\tilde \Phi_1^i(x) = \frac{1}{c_i} \tilde G_i(x) + h_i,\; x \in \tilde N_i$. The extended second mirror is the graph of the function $\tilde \Phi_2^i(x)$ defined on $\tilde f_i(\tilde N_i)$ by the formula analogous to (\ref{psi2}), with $\Phi_1^i$ and $g_i$ replaced by $\tilde\Phi_1^i$ and $\tilde g_i$ (with $\tilde g_i(x) = \tilde f_i(x) - x$). This extended 2-mirror system generates a bundle of 3-segment polygonal lines having billiard reflections off the two mirrors. As follows from statement (b) of Theorem \ref{t1}, for $c_i$ sufficiently large, the intermediate segments of the polygonal lines do not have interior points of intersection with the mirrors of the extended $i$th system. The same is obviously true for the original systems formed by the graphs of $\Phi_1^i$ and $\Phi_2^i$.

(b) Choose $h_i$ inductively, so that  $i$th pair of mirrors lies below the previous ($1,\ldots,i-1$) pairs. This choice guarantees that intermediate segments in  $i$th system do not intersect the mirrors of $j$th systems with $j \ne i$.
\end{proof}

\section{Mirror transformations in $\RRR^3$}
\label{sec:mirr_trans}

Let $f : D_1 \to D_2$ be a diffeomorphism of compact domains in $\RRR^2$. Consider the question: is it possible to realize $f$ by mirror reflections, and how many mirrors and/or mirror reflections are needed? Theorem \ref{t1} states that if $f$ is a gradient mapping, then (under some additional assumptions) a realization with 2 reflections is possible, and if $f$ is not a gradient mapping, such a realization is impossible.

Taking into account Proposition \ref{cor}, one concludes, in a similar way, that if $f$ is a {\it piecewise gradient} mapping, then (again under some additional assumptions) a realization with 2 reflections from {\it finitely many} mirrors is possible, and if $f$ is not piecewise-gradient, such a realization with 2 reflections is impossible.

It is natural to try to represent $f$ as the composition of two gradient diffeomorphisms. In this case the following lemma holds.

\begin{lemma}\label{l1}
Let $f = \nabla\psi \circ \nabla\varphi$, where $\varphi : D_1 \to \RRR$ and $\psi : D' \to \RRR$ are smooth functions, $\nabla\varphi : D_1 \to D'$ and $\nabla\psi : D' \to D_2$ are diffeomorphisms, and $D_1,\, D',\, D_2$ are convex compact domains. Then $f$ can be realized by 4 reflections from 4 smooth mirrors.
\end{lemma}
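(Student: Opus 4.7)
The plan is to apply Theorem~\ref{t1}(b) twice, once to each factor of $f = \nabla\psi\circ\nabla\varphi$, and to assemble the two resulting 2-mirror systems along a common intermediate parallel beam. The hypothesis of Theorem~\ref{t1}(b) requires that the domain and the codomain of the gradient diffeomorphism be convex and \emph{disjoint}, so before anything else I would use the gauge freedom $\varphi \mapsto \tilde\varphi := \varphi + \langle a, \cdot\rangle$ and $\psi \mapsto \tilde\psi := \psi(\,\cdot\, - a)$ to translate the intermediate image from $D'$ to $D' + a$. One checks immediately that $\nabla\tilde\psi\circ\nabla\tilde\varphi = f$, and since $D_1, D', D_2$ are all bounded and convex, a sufficiently large shift $a$ makes $D' + a$ convex and disjoint from both $D_1$ and $D_2$. (Note that $D_1\cap D_2$ may stay nonempty; this is irrelevant.)

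Next I would apply Theorem~\ref{t1}(b) to the diffeomorphism $\nabla\tilde\varphi : D_1 \to D' + a$, whose displacement $\nabla\tilde\varphi(x) - x$ is the gradient of $G_1(x) := \tilde\varphi(x) - |x|^2/2$, to obtain a 2-mirror system $S_1$. I would then apply it again to $\nabla\tilde\psi : D'+a \to D_2$ (with primitive $G_2(y) := \tilde\psi(y) - |y|^2/2$) to obtain a second 2-mirror system $S_2$. Each $S_i$ is specified up to a scale parameter $c_i$, which the theorem lets us take arbitrarily large, and a height parameter $h_i$, which we are free to choose.

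To finish, I would stack the two systems along the intermediate beam with cross section $D'+a$. Fix $c_1, c_2$ large enough that within each system the slanted intermediate segment avoids its own mirrors (as in the proof of Theorem~\ref{t1}(b)), then set $h_1 \ll h_2$ so that both mirrors of $S_1$ lie strictly below both mirrors of $S_2$. A typical trajectory then rises vertically from $x \in D_1$, reflects twice off $S_1$, becomes vertical again over $\nabla\tilde\varphi(x)\in D'+a$, crosses the parallel gap between the systems, reflects twice off $S_2$, and emerges vertical over $\nabla\tilde\psi(\nabla\tilde\varphi(x)) = f(x)$, giving the desired four-reflection realization. The slanted segments inside $S_1$ all lie below height $h_2$ and those inside $S_2$ all lie above height $h_1$, so cross-system intersections are excluded once $h_2 - h_1$ is sufficiently large. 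The main obstacle I anticipate is precisely this geometric bookkeeping, keeping the common beam $D'+a$ clear of four potentially competing mirrors, but the freedoms $a$, $h_1, h_2, c_1, c_2$ provided by Theorem~\ref{t1}(b) are exactly what is needed to dispose of it.
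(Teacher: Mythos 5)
Your proposal is correct and takes essentially the same route as the paper: translate the intermediate domain by the linear gauge on $\varphi$ (with the matching adjustment of $\psi$) so that it is disjoint from $D_1\cup D_2$, apply Theorem~\ref{t1}(b) to each gradient factor, and then separate the two 2-mirror systems by a vertical shift so the first pair lies below the second. The only differences are cosmetic (your explicit potentials $G_i(x)=\tilde\varphi(x)-|x|^2/2$, etc., and the bookkeeping of the parameters $a,c_i,h_i$), which the paper leaves implicit.
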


\begin{proof}
Without loss of generality one can assume that $D'$ does not intersect $D_1 \cup D_2$. Indeed, otherwise one can replace $\varphi$ by $\varphi + bx$,\, $\psi$ by $\psi - bx$, and $D'$ by $D' + b$, where $b \in \RRR^2$ is chosen so that $D' + b$ does not intersect $D_1 \cup D_2$.

By claim (b) of Theorem \ref{t1}, there are two smooth mirrors transforming the parallel bundle of rays with the orthogonal cross section $D_1$ into the parallel bundle with the orthogonal cross section $D'$, and two smooth mirrors transforming the parallel bundle with the cross section $D'$ into the parallel bundle with the cross section $D_2$. Further, by parallel shifting in the vertical direction $(0,0,1)$ one can ensure that the former pair of mirrors lies in the half-space $z<0$ and the latter pair of mirrors lies in the half-space $z>0$, and therefore they do not intersect (see Fig.~\ref{fig_4refl}).
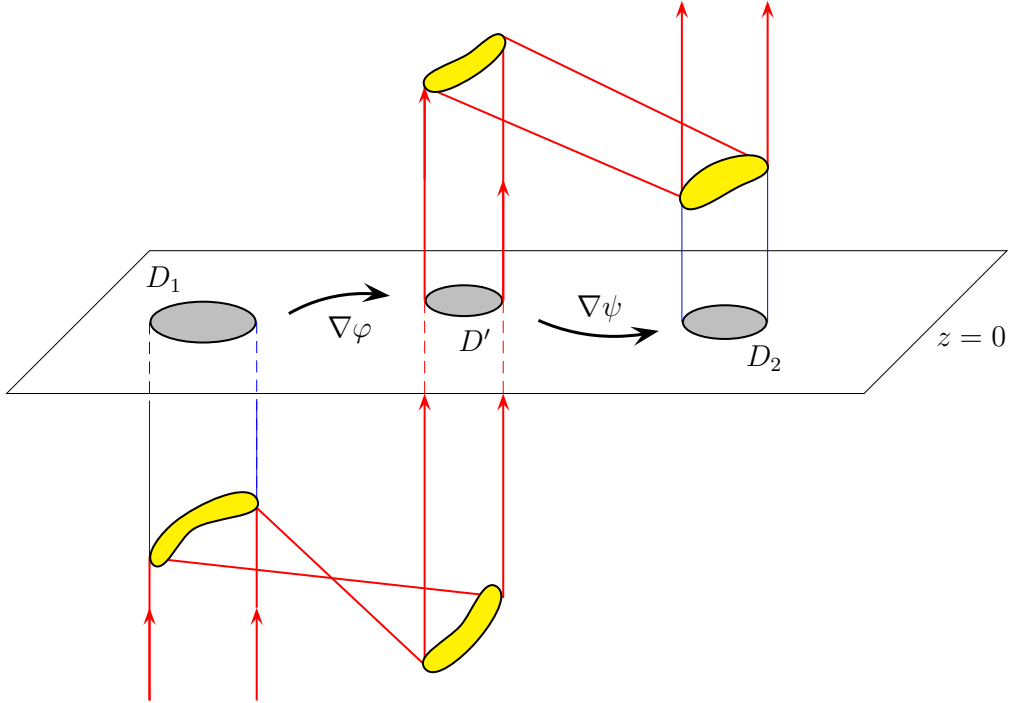
\begin{figure}[h]
\begin{picture}(0,270)
\rput(6,0.4){
\scalebox{0.95}{
   \pspolygon[linewidth=0.4pt](-6,4)(-4,6)(8,6)(6,4)
   \psellipse[fillstyle=solid,fillcolor=lightgray](-3.25,5)(0.75,0.3)
   \psellipse[fillstyle=solid,fillcolor=lightgray](0.4,5.3)(0.55,0.23)
   \psline[linewidth=0.8pt,linecolor=red,arrows=->,arrowscale=1.6](-4,-0.3)(-4,1.0)
  \psline[linewidth=0.8pt,linecolor=red,arrows=->,arrowscale=1.6](-4,-0.3)(-4,1.0)(-4,1.7)(-3.75,1.67)
  \psline[linewidth=0.8pt,linecolor=red,arrows=->,arrowscale=1.6](-3.75,1.67)(0.95,1.15)(0.95,4)
  \psline[linewidth=0.3pt,linecolor=red,linestyle=dashed](0.95,4)(0.95,5.3)
  \psline[linewidth=0.8pt,linecolor=red,arrows=->,arrowscale=1.6](0.95,5.3)(0.95,7)
            \psline[linewidth=0.8pt,linecolor=red,arrows=->,arrowscale=1.6](0.95,5.3)(0.95,7)(0.95,9)(4.65,7.2)(4.65,9.5)
                \psline[linewidth=0.4pt,linecolor=blue](4.65,7.2)(4.65,5)
                \psline[linewidth=0.4pt,linecolor=blue](3.45,6.75)(3.45,5)
                \psellipse[fillstyle=solid,fillcolor=lightgray](4.05,5)(0.6,0.25)
       \psline[linewidth=0.4pt,linecolor=blue](-4,1.7)(-4,3.9)
       \psline[linewidth=0.4pt,linecolor=blue,linestyle=dashed](-4,4)(-4,5)
   \psline[linewidth=0.8pt,linecolor=red,arrows=->,arrowscale=1.6](-2.5,-0.3)(-2.5,1.0)
   \psline[linewidth=0.8pt,linecolor=red,arrows=->,arrowscale=1.6](-2.5,1.0)(-2.5,2.4)(-0.15,0.2)(-0.15,4)
   \psline[linewidth=0.3pt,linecolor=red,linestyle=dashed](-0.15,4)(-0.15,5.3)
   \psline[linewidth=0.8pt,linecolor=red,arrows=->,arrowscale=1.6](-0.15,5.3)(-0.15,8.3)
            \psline[linewidth=0.8pt,linecolor=red,arrows=->,arrowscale=1.6](-0.15,7)(-0.15,8.3)(3.45,6.75)(3.45,9.5)
            \psecurve[fillstyle=solid,fillcolor=yellow](0.5,8.5)(-0.15,8.3)(0.45,8.75)(0.95,9)(0.55,8.45)(-0.15,8.3)(0.4,8.8)
                 \psecurve[fillstyle=solid,fillcolor=yellow](4.25,6.9)(3.45,6.65)(3.85,7.2)(4.65,7.2)(4.25,6.9)(3.45,6.65)(4.15,7.1)
  \psline[linewidth=0.4pt,linecolor=blue](-2.5,2.5)(-2.5,3.9)
  \psline[linewidth=0.4pt,linecolor=blue,linestyle=dashed](-2.5,2.4)(-2.5,5)
   \rput(-3.55,-0.3){\psecurve[fillstyle=solid,fillcolor=yellow](4.1,0.8)(3.4,0.45)(3.9,1.05)(4.4,1.6)(4.1,0.8)(3.4,0.45)(3.9,1.05)}
   \rput(-3.55,-0.8){\psecurve[fillstyle=solid,fillcolor=yellow](0.35,2.8)(-0.4,2.4)(-0.1,3.0)(1.06,3.3)(0.16,2.9)(-0.4,2.4)(-0.1,3.0)}
   \rput(-3.8,5.6){$D_1$}
   \rput(0.55,4.75){$D'$}
   \rput(4.6,4.5){$D_2$}

    \rput(7.5,4.8){$z = 0$}
   \rput(-1.2,4.9){$\nabla\varphi$}
   \psarc[linewidth=1.2pt,arrows=<-,arrowscale=2](-1,3.3){2.1}{80}{120}
   \rput(2.3,5.2){$\nabla\psi$}
   \psarc[linewidth=1.2pt,arrows=->,arrowscale=2](2.5,7.4){2.6}{246}{284}
}
}
\end{picture}
\caption{A 4-mirror transformation of light rays.}
\label{fig_4refl}
\end{figure}

Note that in this construction the intermediate (after two reflections) bundle of rays is also parallel and has vertical direction $(0,0,1)$.
\end{proof}

However, in general, one cannot represent a local plane diffeomorphism as a composition of two gradient diffeomorphisms. In the next subsection we prove the following more restricted result: any {\it orientation reversing} diffeomorphism can be locally (in a neighborhood of any point) represented as a composition of two gradient diffeomorphisms. For {\it orientation preserving} diffeomorphisms, however, this is not always true, as an example will show.

\subsection{Orientation reversing transformations}

\begin{predl}\label{l2}
Any orientation reversing diffeomorphism $f : D_1 \to D_2$ is locally (in a neighborhood of any point $x \in D_1$) a composition of two gradient diffeomorphisms.
\end{predl}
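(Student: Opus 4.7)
The plan is to reduce existence of a local decomposition $f=\nabla\psi\circ\nabla\varphi$ near a chosen point $x_0\in D_1$ to solvability of a single scalar second-order linear PDE for $\varphi$, and then to observe that the orientation-reversing hypothesis is exactly what makes that PDE strictly hyperbolic. If $\varphi$ is smooth on a neighborhood $U\ni x_0$ with $\det H_\varphi\neq 0$ on $U$, then $\nabla\varphi$ is a local diffeomorphism, and the composite $f\circ(\nabla\varphi)^{-1}$ is itself a gradient map if and only if its Jacobian $J_f(x)\,H_\varphi(x)^{-1}$ is symmetric at every $x\in U$; in that case the Poincar\'e lemma on the contractible image neighborhood produces $\psi$ with $\nabla\psi=f\circ(\nabla\varphi)^{-1}$, giving $f=\nabla\psi\circ\nabla\varphi$.

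Because $H_\varphi$ is symmetric, symmetry of $J_fH_\varphi^{-1}$ is equivalent to $H_\varphi J_f=J_f^{T}H_\varphi$. Writing $J_f=\begin{pmatrix}a&b\\c&d\end{pmatrix}$ and expanding, this matrix identity collapses to the single scalar equation
\[
 L\varphi\;:=\;b\,\varphi_{xx}\;-\;(a-d)\,\varphi_{xy}\;-\;c\,\varphi_{yy}\;=\;0.
\]
The principal symbol $b\xi^{2}-(a-d)\xi\eta-c\eta^{2}$ has discriminant $(a-d)^{2}+4bc=(\mathrm{tr}\,J_f)^{2}-4\det J_f$, which is strictly positive near $x_0$ precisely because $\det J_f(x_0)<0$. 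Thus $L$ is strictly hyperbolic with smooth coefficients; its two real characteristic directions are transverse, so a smooth non-characteristic curve $\gamma$ through $x_0$ exists. The classical Cauchy existence theorem for strictly hyperbolic second-order linear operators with $C^{\infty}$ coefficients then yields a unique local smooth solution of $L\varphi=0$ for any prescribed smooth Cauchy data on $\gamma$.

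It remains to arrange $\det H_\varphi(x_0)\neq 0$. The pointwise algebraic constraint $b\,p-(a-d)\,q-c\,r=0$ (with $p=\varphi_{xx}$, $q=\varphi_{xy}$, $r=\varphi_{yy}$) cuts out a two-dimensional affine plane in the three-dimensional space of symmetric matrices at $x_0$; on this plane the determinant $pr-q^{2}$ pulls back to a quadratic form whose discriminant is again $(a-d)^{2}+4bc>0$. Hence this form is indefinite, its zero locus is a pair of lines through the origin, and generic Cauchy data on $\gamma$---which, via the PDE, fix $H_\varphi(x_0)$---yield $\det H_\varphi(x_0)\neq 0$. By continuity $\det H_\varphi\neq 0$ on a neighborhood of $x_0$, and the decomposition recorded in the first paragraph is valid there.

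The main hurdle I anticipate is invoking smooth local solvability of the Cauchy problem for a strictly hyperbolic $C^{\infty}$-coefficient operator, which must be obtained from the energy-estimate theory rather than from Cauchy--Kovalevskaya (the latter demanding analytic data). The orientation-reversing hypothesis $\det J_f<0$ enters twice in an essential way: it makes the principal part of $L$ strictly hyperbolic (supplying Cauchy existence of smooth solutions) and it makes the quadratic form governing $\det H_\varphi$ on the constraint plane indefinite (ensuring that generic Cauchy data yield a non-singular Hessian). Both features can fail in the orientation-preserving case.
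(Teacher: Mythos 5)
Your proposal is correct and is essentially the paper's own argument: both reduce the existence of the factorization to a linear second-order PDE for a single potential whose principal symbol has discriminant $(\mathrm{tr}\,J_f)^2-4\det J_f$, so that orientation reversal gives strict hyperbolicity, and then choose data making the relevant Hessian nondegenerate so that the gradient map is a local diffeomorphism. The only (cosmetic) differences are that you solve for $\varphi$ on the source side with Cauchy data on a noncharacteristic curve and a constraint-plane argument for $\det H_\varphi(x_0)\neq 0$, whereas the paper solves the mirror equation $\nabla u\circ f=\nabla\varphi$ for $u$ on the target side and prescribes data on the two characteristics.
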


\begin{proof}
Since the diffeomorphism $f : (x_1,x_2) \mapsto (f_1(x_1,x_2), f_2(x_1,x_2))$ is orientation reversing, one has
$$
\left| \begin{array}{cc}
\frac{\pl f_1}{\pl x_1} & \frac{\pl f_1}{\pl x_2}\\ \frac{\pl f_2}{\pl x_1} & \frac{\pl f_2}{\pl x_2}
\end{array}\right| < 0.
$$

Fix $x \in D_1$. We are looking for a small open disc $B(x)$ centered at $x$ and for two functions $\varphi$ and $\psi$ such that $\nabla\varphi$ is a diffeomorphism from $B(x)$ onto an open set $B'$,\, $\nabla\psi$ is a diffeomorphism from $B'$ onto $f(B(x))$, and
$$
f\rfloor_{B(x)} = \nabla\psi \circ \nabla\varphi.
$$
Using the fact that the inverse of a gradient diffeomorphism is again a gradient diffeomorphism and setting $(\nabla\psi)^{-1}=\nabla u$, one can reformulate the task as follows: find a disc $B(x)$ and two functions $\vphi : B(x) \to \RRR$ and $u : f(B(x)) \to \RRR$ such that $\nabla\varphi : B(x) \to B' := \nabla\varphi(B(x))$ and $\nabla u : f(B(x)) \to B'$ are diffeomorphisms and
\beq\label{eq1}
\nabla u \circ f\rfloor_{B(x)} = \nabla\varphi.
\eeq
(see Fig.~\ref{fig_comp}).

\begin{figure}[h]
\begin{picture}(0,120)
\rput(6,0.5){
\scalebox{1}{
\psellipse(-3.7,2)(2,1.5)
\psellipse(4.2,2)(2,1.5)
\pscircle[fillstyle=solid,fillcolor=lightgray](-4,2){0.4}
\pscircle[fillstyle=solid,fillcolor=lightgray](4,2){0.4}
\pscircle[fillstyle=solid,fillcolor=lightgray](0,0){0.4}
\psdots(-4,2)(4,2)(0,0)
\psarc[linewidth=1pt,arrows=<-,arrowscale=2](0,-6){9}{70}{110}
\psarc[linewidth=1pt,arrows=<-,arrowscale=2](-5,-5){7}{50}{75}
\psarc[linewidth=1pt,arrows=<-,arrowscale=2](-1,7){7}{283}{308}
\rput(-4.3,2.7){$B(x)$}
\rput(4.6,2.7){$f(B(x))$}
\rput(-5.1,0.5){$D_1$}
\rput(6.3,1.1){$D_2$}
\rput(0,3.3){$f$}
\rput(-1.5,0.7){$\nabla\varphi$}
\rput(1.8,0.25){$\nabla u$}
}
}
\end{picture}
\caption{Local representation of $f$ as a composition of two gradient diffeomorphisms.}
\label{fig_comp}
\end{figure}
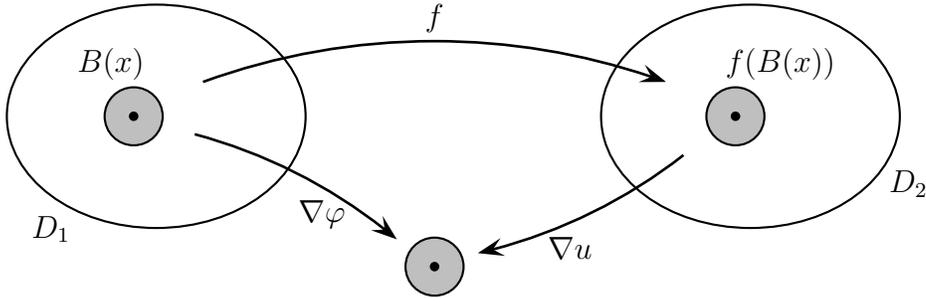

Note that the mapping on the left hand side of (\ref{eq1}) takes $x = (x_1,x_2)$ to $(\pl_1 u(f_1(x),f_2(x)),\, \pl_2 u(f_1(x),f_2(x)))$. This mapping is the gradient of a function (and therefore (\ref{eq1}) is solvable), if and only if
$$
\frac{\pl}{\pl x_2} \big[\pl_1 u(f_1(x),f_2(x)) \big] = \frac{\pl}{\pl x_1} \big[\pl_2 u(f_1(x),f_2(x)) \big],
$$
or, in a more detailed form,
$$
\pl^2_{11} u(f_1(x),f_2(x))\, \frac{\pl f_1}{\pl x_2}(x) + \pl^2_{12} u(f_1(x),f_2(x))\, \frac{\pl f_2}{\pl x_2}(x) \hspace*{40mm}
$$
\beq\label{edp0}
\hspace*{15mm} = \pl^2_{21} u(f_1(x),f_2(x))\, \frac{\pl f_1}{\pl x_1}(x) + \pl^2_{22} u(f_1(x),f_2(x))\, \frac{\pl f_2}{\pl x_1}(x).
\eeq
Denote by $\xi \mapsto g(\xi)$ the mapping inverse to $x \mapsto f(x)$; then we arrive at the equation for the unknown function $u$
\beq\label{pde}
\frac{\pl f_1}{\pl x_2}(g(\xi))\, \frac{\pl^2 u}{\pl\xi_1^2}(\xi) + \Big( \frac{\pl f_2}{\pl x_2}(g(\xi)) - \frac{\pl f_1}{\pl x_1}(g(\xi)) \Big)\, \frac{\pl^2 u}{\pl\xi_1 \pl\xi_2}(\xi) - \frac{\pl f_2}{\pl x_1}(g(\xi))\, \frac{\pl^2 u}{\pl\xi_2^2}(\xi) = 0.
\eeq
The discriminant of this equation is
$$
-4 \frac{\pl f_1}{\pl x_2} \frac{\pl f_2}{\pl x_1} - \Big( \frac{\pl f_2}{\pl x_2} - \frac{\pl f_1}{\pl x_1} \Big)^2
= 4\left| \begin{array}{cc}
\frac{\pl f_1}{\pl x_1} & \frac{\pl f_1}{\pl x_2}\\ \frac{\pl f_2}{\pl x_1} & \frac{\pl f_2}{\pl x_2}
\end{array}\right|
- \Big( \frac{\pl f_2}{\pl x_2} + \frac{\pl f_1}{\pl x_1} \Big)^2 < 0;
$$
that is, the  2nd order PDE  (\ref{pde}) is hyperbolic and hence has infinitely many solutions (note that the boundary conditions are not yet specified).

Consider two characteristics of this equation through $x$; a solution $u$ in a neighborhood of $x$ is uniquely defined by its values on these characteristics, again in a neighborhood of $x$. Let us show that the first and the second derivatives at $x$ along the characteristics can be chosen in such a way that the gradient $\nabla u$ of the resulting solution is a diffeomorphism in a sufficiently small disc  $B(x)$.

Indeed, let the characteristics be given by $\al(\xi_1,\xi_2) =$ const and $\bt(\xi_1,\xi_2) =$ const. Let the first derivatives at $x$  be zero, $u_\al = 0$ and $u_\bt = 0$; then the second derivatives of $u$ in the coordinate systems $\xi_1,\, \xi_2$ and $\al,\, \bt$ are related as follows:
\begin{equation}\label{matrix}
Hu_{\xi_1,\xi_2} = J^T Hu_{\al,\bt}\, J,
\end{equation}
where
$$
Hu_{\xi_1,\xi_2} =
\left(\!\!
\begin{array}{cc}
\partial_{11}u & \partial_{12}u\\
\partial_{12}u & \partial_{22}u
\end{array}
\!\! \right), \,
Hu_{\al,\bt} =
\left(\!\!
\begin{array}{cc}
u_{\al\al} & u_{\al\bt}\\
u_{\al\bt}  & u_{\bt\bt}
\end{array}
\!\! \right),
\ \text{and}\ \,
 J = \left(\!\!
\begin{array}{cc}
\partial_{1}\al & \partial_{2}\al\\
\partial_{1}\bt & \partial_{2}\bt
\end{array}
\!\! \right).
$$
Taking into account that the Jacobi matrix $J$ is non-degenerate, it suffices to define the second derivatives along the characteristics $u_{\al\al},\, u_{\al\bt}$, and $u_{\bt\bt}$ in such a way that the corresponding matrix $Hu_{\al,\bt}$ is non-degenerate. Then the Hessian matrix $Hu_{\xi_1,\xi_2}$ in the original coordinate system is also non-degenerate, and therefore $\nabla u$ is a diffeomorphism in a neighborhood of the point.

We are now guaranteed that the composition of mappings on the left hand side of (\ref{eq1}) is a diffeomorphism and the gradient of a certain function.
\end{proof}

The following statement is a corollary of Proposition \ref{l2}.

\begin{corollary}\label{cor1}
An orientation reversing diffeomorphism can be locally realized by 4 reflections from a 4-mirror system.
\end{corollary}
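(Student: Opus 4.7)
The plan is to simply chain Proposition \ref{l2} with Lemma \ref{l1}, with a small amount of care about the convexity hypothesis. Given an orientation reversing diffeomorphism $f : D_1 \to D_2$ and a point $x \in D_1$, Proposition \ref{l2} gives a disc $B(x)$ on which $f\rfloor_{B(x)} = \nabla\psi \circ \nabla\varphi$, where $\nabla\varphi : B(x) \to B'$ and $\nabla\psi : B' \to f(B(x))$ are diffeomorphisms. To apply Lemma \ref{l1} we need the three domains $B(x)$, $B'$, $f(B(x))$ to be convex (and then Lemma \ref{l1} itself takes care of the disjointness, via the translation trick that replaces $\varphi$ by $\varphi + bx$ and $\psi$ by $\psi - bx$).

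First I would shrink $B(x)$ so that all three domains are convex. Since $B(x)$ is already a disc, it is convex. For the other two, by continuity of the derivatives, the diffeomorphisms $\nabla\varphi$ and $f$ are arbitrarily close on a small enough disc to their affine linearizations at $x$; the image of a sufficiently small round disc under a smooth diffeomorphism is $C^1$-close to an ellipse and is therefore convex. Thus by choosing the radius of $B(x)$ small enough from the outset, one secures convexity of both $B' = \nabla\varphi(B(x))$ and $f(B(x))$ simultaneously.

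With this adjustment, Lemma \ref{l1} applies directly: there exist four smooth mirrors, the first two realizing $\nabla\varphi$ and the last two realizing $\nabla\psi$, whose cascade realizes the original map $f\rfloor_{B(x)}$. The height parameters in Theorem \ref{t1}(b) are used to separate the two pairs vertically, as in Figure \ref{fig_4refl}, guaranteeing that no ray of the composite system hits an unintended mirror.

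There is no real obstacle here: the whole content is in Proposition \ref{l2} (which produces the factorization) and Lemma \ref{l1} (which builds the mirrors). The only technical point worth double-checking is the convexity reduction above, which is a routine shrinking argument and not an essential difficulty.
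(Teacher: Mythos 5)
Your proof is correct and takes essentially the same route as the paper: Proposition \ref{l2} provides the local factorization into two gradient diffeomorphisms, the disc is shrunk so that the image domains (deformed ellipses) become convex, and a translation by $b$ plus vertical separation yields two 2-mirror systems via Theorem \ref{t1}(b). The only cosmetic difference is that you package the final step as an appeal to Lemma \ref{l1}, while the paper simply repeats that lemma's argument inside the corollary's proof.
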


\begin{proof}
Indeed, by Proposition \ref{l2}, for any point $x \in D_1$ we have a representation
$$
f\rfloor_{B(x)} = \nabla\psi \circ \nabla\varphi,
$$
where $B(x)$ is a disc centered at $x$. The sets $f(B(x))$ and $\nabla\varphi(B(x))$ are deformed ellipses. They become convex when the radius of $B(x)$ is sufficiently small. Further, by replacing, if necessary, $\varphi$ by $\tilde\varphi(x) = \varphi(x) + bx$ and $\psi$ by $\tilde\psi(x) = \psi(x) - bx$, where $b \in \RRR^2$ is an suitable vector, one can assure that $B' := \nabla\varphi(B(x)) + b$ does not intersect $B(x) \cup f(B(x))$.

Now we have two gradient diffeomorphisms $\nabla\tilde\varphi : B(x) \to B'$, where $B(x)$ and $B'$ are convex and disjoint, and $\nabla\tilde\psi : B' \to f(B(x))$, where $B'$ and $f(B(x))$ are convex and disjoint, and $f\rfloor_{B(x)}$ is their composition. By statement (b) of Theorem \ref{t1}, both $\nabla\tilde\varphi$ and $\nabla\tilde\psi$ can be realized by 2 reflections in two 2-mirror systems. By a vertical (along the $z$-axis) shift one can render  these systems  disjoint. As a result, the union of these mirrors locally realizes $f$ via 4 reflections.
\end{proof}

Without the assumption of orientation reversal the statement of Proposition \ref{l2} is not true: there exist a diffeomorphism $f : D_1 \to D_2$ (which preserves the orientation) and a point $x \in D_1^\circ$ such that $f$ is not a composition of two gradient diffeomorphisms in a neighborhood of $x$. Below we reproduce (in a slightly modified form) an example proposed by A. Glutsyuk.

\begin{primer}\label{primer}\rm
Consider the diffeomorphism
$$
f(x_1,x_2) = e^{x_2}(x_1,x_2)
$$
defined on a ball $\bar B_r(0,0)$ of radius $r < 1$, and take $x = (0,0)$. Suppose that $f$ is a composition of two gradient diffeomorphisms in a neighborhood of $(0,0)$; then, repeating the argument in the proof of Proposition \ref{l2}, one arrives at formula (\ref{edp0}) which, in this particular case, takes the form
$$
x_1 e^{x_2}\, \partial^2_{11} u(x_1 e^{x_2},\, x_2 e^{x_2}) + x_2 e^{x_2}\, \partial^2_{12} u(x_1 e^{x_2},\, x_2 e^{x_2}) = 0,
$$
again in a neighborhood of $(0,0)$. Denoting $\xi = x_1 e^{x_2}$,\, $\eta = x_2 e^{x_2}$ and  $f(\xi, \eta) = u'_\xi(\xi, \eta)$, one obtains the equation
$$
\xi f'_\xi + \eta f'_\eta = 0
$$
in a neighborhood of $(0,0)$. By Euler's formula, it follows that $f(\xi, \eta)$ is homogeneous of degree zero, and hence  $f =$ const. This implies that $\nabla u(x_1,x_2)$ has a constant first component, and therefore $\nabla u$ is not a diffeomorphism.
\end{primer}

Now, using Propositions \ref{cor} and \ref{l2}, we shall prove the following theorem. The main concern here, like in Proposition \ref{cor}, is to avoid irrelevant intersections of polygonal lines (presumably billiard trajectories) with mirrors.

\begin{theorem}\label{t2}
Any orientation reversing diffeomorphism $f : D_1 \to D_2$ of compact domains in $\RRR^2$ can be realized by 4 reflections from a finite collection of mirrors.
\end{theorem}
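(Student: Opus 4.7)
The plan is to globalize Corollary \ref{cor1} by patching finitely many local realizations together via Proposition \ref{cor}. By Proposition \ref{l2}, every $x\in D_1$ has an open neighborhood $B(x)$ on which $f$ admits a factorization $f|_{B(x)}=\nabla\psi_x\circ\nabla\varphi_x$ into two gradient diffeomorphisms. Compactness of $D_1$ yields a finite subcover $B_1,\dots,B_m$ with Lebesgue number $\delta>0$. I would then partition $D_1$ into finitely many closed pieces $N_1,\dots,N_k$ with pairwise disjoint interiors and diameters less than $\delta/4$ (for instance by intersecting $D_1$ with the cells of a fine triangulation), so that each $N_j$ lies inside a closed convex neighborhood $\tilde N_j$ of diameter less than $\delta/2$, itself contained in some $B_{i(j)}$. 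On each $\tilde N_j$ the local factorization $f=\nabla\psi_{i(j)}\circ\nabla\varphi_{i(j)}$ is then well defined; refining the partition further if necessary, I may assume that the images $\nabla\varphi_{i(j)}(\tilde N_j)$ and $f(\tilde N_j)$ are convex.

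The intermediate ranges $\nabla\varphi_{i(j)}(N_j)$ typically overlap, so the next step is to pull them apart. I would choose translation vectors $b_1,\dots,b_k\in\RRR^2$ (placed, for example, evenly on a sufficiently large circle) so that the translated pieces $B'_j:=\nabla\varphi_{i(j)}(N_j)+b_j$ are pairwise disjoint and their union $D'$ is disjoint from $D_1\cup D_2$. Replacing $\varphi_{i(j)}$ by $\hat\varphi_j(x):=\varphi_{i(j)}(x)+\langle b_j,x\rangle$ and $\psi_{i(j)}$ by $\hat\psi_j(y):=\psi_{i(j)}(y-b_j)$ preserves the composition $\nabla\hat\psi_j\circ\nabla\hat\varphi_j=f|_{N_j}$, while $\nabla\hat\varphi_j$ now maps $N_j$ diffeomorphically onto $B'_j$ and $\nabla\hat\psi_j$ maps $B'_j$ diffeomorphically onto $f(N_j)$. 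I then define the piecewise-gradient maps $F_1:D_1\to D'$ by $F_1|_{N_j}=\nabla\hat\varphi_j$ and $F_2:D'\to D_2$ by $F_2|_{B'_j}=\nabla\hat\psi_j$, so that $f=F_2\circ F_1$.

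Finally, I would apply Proposition \ref{cor} twice, once to $F_1$ and once to $F_2$. In both cases the required hypotheses hold: the source pieces have pairwise disjoint interiors; the image pieces have pairwise disjoint interiors (for $F_2$ this uses that $f$ itself is a global diffeomorphism $D_1\to D_2$); source and image unions are disjoint by the choice of the $b_j$; and each piece admits a convex extension (either $\tilde N_j$ or $\tilde N_j+b_j$) inside the corresponding ball $B_{i(j)}$ on which the relevant gradient map is still a diffeomorphism onto a convex image. This produces finite mirror collections $\mathcal M_1$ and $\mathcal M_2$ realizing $F_1$ and $F_2$ with two reflections per ray each. Using the height freedom $h$ from Theorem \ref{t1}(b), I place $\mathcal M_1$ in a low horizontal slab and $\mathcal M_2$ in a high one, as in Lemma \ref{l1}, so that the two stages do not interfere; the composite system then realizes $f$ with exactly four reflections per ray.

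The main obstacle throughout is to rule out superfluous intersections of polygonal trajectories with mirrors. Within each of the two stages this is precisely the content of Proposition \ref{cor} (secured by taking the constants $c_{i,j}$ large enough and ordering the heights $h_{i,j}$ inductively), while between stages it is taken care of by the vertical slab separation. The only genuinely new ingredient beyond Corollary \ref{cor1} is the simultaneous choice of the translations $b_j$, which is straightforward since they may be selected independently in the unbounded plane.
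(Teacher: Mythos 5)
Your proposal is correct and follows essentially the same route as the paper's proof: localize via Proposition \ref{l2}, take a finite cover and pieces $N_k$ with a subordinate choice of translations $b_k$ separating the intermediate images from $D_1\cup D_2$, apply Proposition \ref{cor} to each of the two resulting piecewise-gradient stages, and decouple the two mirror collections by a vertical shift. The only (minor) deviation is that the paper realizes the second stage by applying Proposition \ref{cor} to the inverse map $\nabla u : D_2 \to D'$ and then reflecting that mirror collection in the plane $z=0$, whereas you apply Proposition \ref{cor} directly to the forward map $F_2 : D' \to D_2$; this is equally legitimate, since the inverse of a gradient diffeomorphism is again a gradient diffeomorphism, so the potentials $\hat\psi_j$ you need are available.
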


\begin{proof}
By Proposition \ref{l2}, $f$ can be locally represented as a composition of two gradient diffeomorphisms. Equivalently, for any $x \in D_1$, there exists an open disc $B(x)$ and two functions $\varphi_{(x)}$ and $u_{(x)}$ defined on $B(x)$ and $f(B(x))$, respectively, such that
\beq\label{eq2}
\nabla u_{(x)} \circ f\rfloor_{B(x)} = \nabla\varphi_{(x)},
\eeq
and $\nabla\varphi_{(x)} : B(x) \to B'_{(x)} := \nabla\varphi_{(x)}(B(x))$ and $\nabla u_{(x)} : f(B(x)) \to B'_{(x)}$ are diffeomorphisms.

Note that $f(B(x))$ and $B'_{(x)}$ are deformed ellipses that become convex when the radius of $B(x)$ is sufficiently small. Thus, without loss of generality, one can assume that both these domains are convex for all $x$.

Take a finite subcover $\{ B(x_k) \}$ of $D_1$ and choose vectors $b_k \in \RRR^2$ such that $B'_{(x_k)} + b_k =: B'_k$ are mutually disjoint and do not intersect the sets $\cup_k B(x_k)$ and $\cup_k f(B(x_k))$.

Choose closed domains $N_k \subset B(x_k)$ so that their interiors $N_k^\circ$ are mutually disjoint and $\cup_k N_k = D_1$. Let $\varphi_k$ be the restriction of the mapping $x \mapsto \varphi_{(x_k)}(x) + b_k x$ to $N_k$, and $u_k$ be the restriction of the mapping $x \mapsto u_{(x_k)}(x) + b_k x$ to $f(N_k)$.

Define the piecewise smooth mappings $\varphi$ on $D_1$ and $u$ on $D_2$ by $\varphi\rfloor_{N_k^\circ} = \varphi_k$ and $u\rfloor_{f(N_k^\circ)} =u_k$. Then we have
$$
\nabla u \circ f = \nabla\varphi.
$$
Moreover, $\nabla\varphi_k$ can be extended to a diffeomorphism between disjoint convex sets $B(x_k)$ and $B'_k$, and $\nabla u_k$ can be extended to a diffeomorphism between disjoint convex sets $f(B(x_k))$ and $B'_k$. Thus, both  mappings $\nabla\varphi$ and $\nabla u$ satisfy the assumptions of Corollary \ref{cor} and therefore can be realized by finite collections of mirrors, with each realization involving two reflections. By simultaneous vertical shifting of the mirrors one ensures that both these collections (denoted by $\mathcal{C}_\vphi$ and $\mathcal{C}_u$) lie in the lower half-space $z < 0$.

The inverse of the gradient diffeomorphism $\nabla u$ is again a gradient diffeomorphism,
$$
(\nabla u)^{-1} = \nabla\psi,
$$
and it can be realized by the collection of mirrors $\mathcal{C}_\psi$ symmetric to $\mathcal{C}_u$ with respect to the plane $z = 0$ (and therefore $\mathcal{C}_\psi$ lies in the upper half-space $z > 0$).

We have
$$
f = \nabla\psi \circ \nabla\varphi;
$$
therefore $f$ can be realized by reflections from the mirrors in $\mathcal{C}_\vphi$ and $\mathcal{C}_\psi$ via 4 reflections (see Fig.~\ref{fig_4tr}).

\begin{figure}[h]
\begin{picture}(0,310)
\rput(6,0.5){
\scalebox{0.95}{
   \pspolygon[linewidth=0.4pt](-6,4)(-4,6)(8,6)(6,4)
   \rput(-0.6,6.4){\scalebox{1.2}{$f$}} 
   \psarc[linewidth=1pt,arrows=<-,arrowscale=2](0.6,-2.6){8.8}{64}{114} 
      \psellipse[fillstyle=solid,fillcolor=lightgray](-3.5,5)(1,0.3) 
       \psellipse[fillstyle=solid,fillcolor=lightgray](5.2,5)(1,0.3) 
        \psellipse[fillstyle=solid,fillcolor=lightgray](0.15,5.2)(0.35,0.15) 
         \psellipse[fillstyle=solid,fillcolor=lightgray](2.05,5.3)(0.25,0.15)
              \psellipse[fillstyle=solid,fillcolor=lightgray](1.3,4.7)(0.3,0.125) 
          \psline[linewidth=0.4pt,linecolor=blue,linestyle=dashed](-3.8,4)(-3.8,4.7) 
          \psline[linewidth=0.4pt,linecolor=blue,linestyle=dashed](-4.5,4)(-4.5,5) 
          \psline[linewidth=0.4pt,linecolor=blue,linestyle=dashed](-3,4)(-3,4.7)
      \psline[linewidth=0.4pt,linecolor=blue](-4.5,2.7)(-4.5,3.9) 
      \psline[linewidth=0.4pt,linecolor=blue](-3.8,3.3)(-3.8,3.9) 
            \psline[linewidth=0.4pt,linecolor=blue](-3,1.8)(-3,3.9)
              \psline[linewidth=0.4pt,linecolor=blue](-2.5,0.6)(-2.5,3.9)
  \psline[linewidth=0.4pt,linecolor=blue,linestyle=dashed](-2.5,4)(-2.5,5)
 \psecurve[fillstyle=solid,fillcolor=yellow,linewidth=0.4pt](-4.2,2.85)(-4.5,2.7)(-4.1,3.15)(-3.8,3.3)(-4.2,2.85)(-4.5,2.7)(-4.1,3.15)  
\rput(4.3,-0.7) {\psecurve[fillstyle=solid,fillcolor=yellow,linewidth=0.4pt](-4.2,2.85)(-4.5,2.7)(-4.1,3.15)(-3.8,3.3)(-4.2,2.85)(-4.5,2.7)(-4.1,3.15)}
   \psline[linewidth=0.8pt,linecolor=red,arrows=->,arrowscale=1.6](-4.5,-0.7)(-4.5,0) 
   \psline[linewidth=0.8pt,linecolor=red,arrows=->,arrowscale=1.6](-4.5,0)(-4.5,2.7)(-0.2,2) 
    \psline[linewidth=0.8pt,linecolor=red,arrows=->,arrowscale=1.6](-3.8,-0.7)(-3.8,0) 
     \psline[linewidth=0.8pt,linecolor=red,arrows=->,arrowscale=1.6](-3.8,0)(-3.8,3.3)(0.5,2.6) 
    \psline[linewidth=0.8pt,linecolor=red,arrows=->,arrowscale=1.6](-0.2,2)(-0.2,4) 
   \psline[linewidth=0.4pt,linecolor=red,linestyle=dashed](-0.2,4)(-0.2,5.2)    
  \psline[linewidth=0.8pt,linecolor=red,arrows=->,arrowscale=1.6](0.5,2.6)(0.5,4) 
     \rput(0,0){\psecurve[fillstyle=solid,fillcolor=yellow,linewidth=0.4pt](1.2,1)(1,0.8)(1.3,1.2)(1.6,1.3)(1.2,1)(1,0.8)(1.3,1.2)} 
     \rput(0.8,-0.85){\psecurve[fillstyle=solid,fillcolor=yellow,linewidth=0.4pt](1.2,0.8)(1,0.8)(1.3,1.1)(1.55,1.15)(1.2,0.8)(1,0.8)(1.3,1.1)} 
      \psline[linewidth=0.8pt,linecolor=red,arrows=->,arrowscale=1.6](-3.8,1.3)(1,0.8)(1,4)
      \psline[linewidth=0.8pt,linecolor=red,arrows=->,arrowscale=1.6](-3,-0.7)(-3,0) 
      \psline[linewidth=0.8pt,linecolor=red,arrows=->,arrowscale=1.6](-3,0)(-3,1.8)(1.6,1.3)(1.6,4) 
  \psline[linewidth=0.4pt,linecolor=red,linestyle=dashed](0.5,4)(0.5,5.2)   
  \psline[linewidth=0.4pt,linecolor=red,linestyle=dashed](1,4)(1,4.7) 
    \psline[linewidth=0.4pt,linecolor=red,linestyle=dashed](1.6,4)(1.6,4.7)
        \psline[linewidth=0.8pt,linecolor=red,arrows=->,arrowscale=1.6](-3,0.3)(1.8,0)(1.8,4)
        \psline[linewidth=0.8pt,linecolor=red,arrows=->,arrowscale=1.6](-2.5,-0.7)(-2.5,0) 
        \psline[linewidth=0.8pt,linecolor=red,arrows=->,arrowscale=1.6](-2.5,0)(-2.5,0.6)(2.3,0.3)(2.3,4) 
          \psline[linewidth=0.4pt,linecolor=red,linestyle=dashed](1.8,4)(1.8,5.3)  
  \psline[linewidth=0.4pt,linecolor=red,linestyle=dashed](2.3,4)(2.3,5.3) 
  \rput(-4.8,0.6){\psecurve[fillstyle=solid,fillcolor=yellow,linewidth=0.4pt](1.2,1)(1,0.7)(1.3,1.2)(1.8,1.3)(1.2,0.8)(1,0.7)(1.3,1.2)} 
\rput(-3.8,-0.5){\psecurve[fillstyle=solid,fillcolor=yellow,linewidth=0.4pt](1.0,0.85)(0.8,0.85)(1.0,1.15)(1.33,1.23)(1.0,0.85)(0.8,0.85)(1.0,1.15)} 
    \rput(0,-0.4){\psecurve[fillstyle=solid,fillcolor=yellow,linewidth=0.4pt](0.25,8.5)(-0.18,8.4)(0.2,8.6)(0.475,8.8)(0.25,8.5)(-0.18,8.4)(0.2,8.6)}
       \psline[linewidth=0.8pt,linecolor=red,arrows=->,arrowscale=1.6](-0.2,5.2)(-0.2,7)
       \psline[linewidth=0.8pt,linecolor=red,arrows=->,arrowscale=1.6](-0.2,6.6)(-0.2,8)(4.2,6.75)(4.2,11)
  \psline[linewidth=0.8pt,linecolor=red,arrows=->,arrowscale=1.6](0.5,5.2)(0.5,7)
  \psline[linewidth=0.8pt,linecolor=red,arrows=->,arrowscale=1.6](0.5,7)(0.5,8.3)(5,7.2)(5,11)
                 \psline[linewidth=0.4pt,linecolor=blue](5,7.2)(5,5.3) 
                \psline[linewidth=0.4pt,linecolor=blue](4.2,6.75)(4.2,5)      
       \psline[linewidth=0.4pt,linecolor=blue](5.6,9.2)(5.6,5.25) 
              \psline[linewidth=0.4pt,linecolor=blue](6.2,9.5)(6.2,5) 
                   \psline(5,4.7)(4.6,5.2)
              \psline(5.7,4.75)(5.2,5.3)
  \rput(2.0,2.55){\scalebox{0.64}       {\psecurve[fillstyle=solid,fillcolor=yellow](4.25,6.9)(3.45,6.65)(3.85,7.2)(4.65,7.2)(4.25,6.9)(3.45,6.65)(4.15,7.1)}} 
   \psline[linewidth=0.8pt,linecolor=red,arrows=->,arrowscale=1.6](1,4.7)(1,8.5)(5,8.5)(5,11) 
  \psline[linewidth=0.8pt,linecolor=red,arrows=->,arrowscale=1.6](1.6,4.7)(1.6,9)(5.6,9)(5.6,11) 
    \psecurve[fillstyle=solid,fillcolor=yellow,linewidth=0.4pt](5.3,8.65)(5,8.5)(5.3,8.85)(5.6,9)(5.3,8.65)(5,8.5)(5.3,8.85) 
     \rput(-4.0,0)
     {\psecurve[fillstyle=solid,fillcolor=yellow,linewidth=0.4pt](5.3,8.65)(5,8.5)(5.3,8.85)(5.6,9)(5.3,8.65)(5,8.5)(5.3,8.85)} 
   \psline[linewidth=0.8pt,linecolor=red,arrows=->,arrowscale=1.6](1.8,5.3)(1.8,9.5)(5.6,9.2)(5.6,11)
  \psline[linewidth=0.8pt,linecolor=red,arrows=->,arrowscale=1.6](2.3,5.3)(2.3,10)(6.2,9.7)(6.2,11)
  \psecurve[fillstyle=solid,fillcolor=yellow,linewidth=0.4pt](6,9.3)(5.6,9.2)(5.85,9.56)(6.25,9.67)(6,9.3)(5.6,9.2)(5.85,9.56)
     \pscurve[fillstyle=solid,fillcolor=yellow,linewidth=0.4pt](2.05,9.7)(1.8,9.5)(2.0,9.85)(2.3,10)(2.05,9.7)(1.8,9.5)(2.0,9.85)
       \rput(-3.8,5.7){$D_1$}
\psline(-3.8,4.7)(-4.0,5.26)
\psline(-3,4.73)(-3.2,5.28)
   \rput(-4.25,4.5){$N_1$}
      \rput(-3.35,4.4){$N_2$}
   \rput(-2.72,4.35){$N_3$}
   \rput(4.9,4.3){$D_2$}
    \rput(7.5,4.8){$z = 0$}
   \rput(-1.2,4.9){$\nabla\varphi$}
   \psarc[linewidth=1pt,arrows=<-,arrowscale=2](-0.9,3.3){2.1}{81}{120}
   \rput(3.3,5.2){$\nabla u$}
   \rput(-5.1,1.3){\scalebox{1.3}{$\mathcal{C}_\varphi$}}
   \rput(0.4,9.4){\scalebox{1.3}{$\mathcal{C}_\psi$}}
   \psarc[linewidth=1pt,arrows=<-,arrowscale=2](3.5,7.4){2.6}{250}{284}
}}
\end{picture}
\caption{A representation of a diffeomorphism $f$ by mirror reflections.}
\label{fig_4tr}
\end{figure}
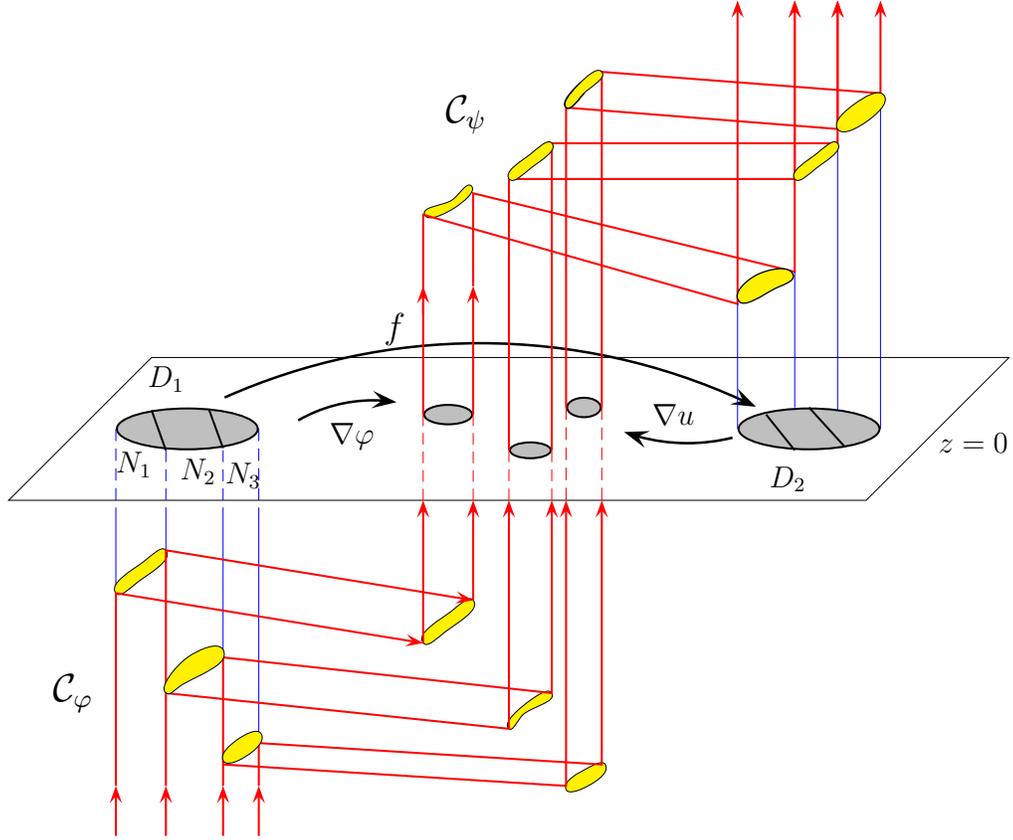

\end{proof}

\subsection{Further results}

We derive several corollaries from Theorem \ref{t2}.

\begin{corollary}\label{cor2}
Any orientation preserving diffeomorphism $f : D_1 \to D_2$ of compact domains in $\RRR^2$ can be realized by 6 reflections from a finite collection of mirrors.
\end{corollary}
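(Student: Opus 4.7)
The plan is to reduce to the orientation reversing case already handled by Theorem \ref{t2}. Fix a simple orientation reversing affine map $r : \RRR^2 \to \RRR^2$ that is itself a gradient diffeomorphism, for instance $r(x_1,x_2) = (x_1, -x_2) + b$, where $b \in \RRR^2$ will be chosen shortly. A direct computation shows $r(x)-x = \nabla R(x)$ with $R(x) = b_1 x_1 + b_2 x_2 - x_2^2$, so both $r$ and $r^{-1}$ are orientation reversing gradient diffeomorphisms. Choose $b$ of sufficiently large magnitude so that $D_3 := r(D_2)$ is disjoint from $D_1 \cup D_2$, and write $f = r^{-1} \circ (r \circ f)$.

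Since $f$ preserves orientation and $r$ reverses it, the composition $r \circ f : D_1 \to D_3$ is an orientation reversing diffeomorphism of compact planar domains. By Theorem \ref{t2}, $r \circ f$ can be realized by 4 reflections from a finite collection of mirrors $\mathcal{M}_1$.

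It remains to realize $r^{-1} : D_3 \to D_2$ by two reflections. If $D_2$ (and hence $D_3$) were convex, this would follow immediately from Theorem \ref{t1}(b). In general, the plan is to invoke Proposition \ref{cor}: cover $D_3$ by finitely many small closed convex pieces $N_i$ with disjoint interiors and verify the extension hypotheses. Since $r^{-1}$ is affine, the generating potential extends smoothly to any larger domain, and the image of a small convex neighborhood $\tilde N_i \supset N_i$ under the affine map $r^{-1}$ is again convex. Taking the $\tilde N_i$ small enough and using the separation $D_2 \cap D_3 = \emptyset$, one can ensure $\tilde N_i \cap r^{-1}(\tilde N_i) = \emptyset$, so Proposition \ref{cor} yields a finite collection $\mathcal{M}_2$ of mirrors realizing $r^{-1}$ by two reflections.

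Each of the two mirror blocks preserves the vertical direction of the light rays, so, exactly as in the proofs of Lemma \ref{l1} and Theorem \ref{t2}, a vertical shift places $\mathcal{M}_1$ in the half-space $z<0$ and $\mathcal{M}_2$ in $z>0$; their union then realizes $f = r^{-1} \circ (r \circ f)$ by $4+2 = 6$ reflections. The main technical point I expect to require care is the 2-reflection realization of $r^{-1}$ on the possibly non-convex domain $D_3$, which is exactly what Proposition \ref{cor} is designed for and whose hypotheses are easy to verify thanks to the affine nature of $r^{-1}$.
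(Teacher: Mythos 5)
Your proposal is correct and follows essentially the same route as the paper: factor $f$ through an orientation reversing planar reflection so that the curved part of the map is handled by Theorem \ref{t2} (4 reflections) and the flat reflection by a 2-mirror system, for a total of 6. The only (harmless) differences are that the paper writes $f=f^*\circ\sigma$ with the reflection $\sigma(x_1,x_2)=(-x_1,x_2)$ applied first and realized by explicit parabolic cylinders, dismissing superfluous intersections directly, whereas you apply the reflection last and justify its 2-reflection realization over the possibly non-convex domain via Proposition \ref{cor}.
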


\begin{proof}
Without loss of generality, assume that $D_1$ lies in the half-plane $x_1 < 0$. The mapping $f$ is the composition  $f = f^* \circ \sigma$, the second mapping defined by
$$
\sigma(x_1,x_2) = (-x_1,x_2),
$$
and the first one by
$$
f^*(x_1,x_2) = f(-x_1,x_2).
$$
The mapping $\sigma$ is realized by 2 reflections from two pieces of parabolic cylinders, $z = (c^2 - x_1^2)/(2c)$ and $z = (x_1^2 - c^2)/(2c)$, with arbitrary $c > 0$, the former piece lying in the half-plane $x_1 < 0$ and the latter one in the half-plane $x_1 > 0$. The mapping $f^*$ is orientation reversing and, by Theorem \ref{t2}, it  can be realized by 4 reflections from finitely many mirrors. One easily excludes the possibility of superfluous intersections. As a result, one obtains a realization of $f$ with 6 reflections.
\end{proof}

Consider two generic germs of normal families of oriented lines in $\RRR^3$, and let $\Sigma_1$ and $\Sigma_2$ be normal surfaces (wave fronts) of these families, topologically, discs.
Consider a diffeomorphism $f$ from the first normal family to the second one; $f$ can be thought of as a diffeomorphism $\Sigma_1 \to \Sigma_2$.

We wish to realize $f$ as a composition of mirror reflections.

\begin{corollary}\label{cor3}
A diffeomorphism $f$ can be realized by at most 7 reflections from a finite collection of mirrors.
\end{corollary}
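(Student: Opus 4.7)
The plan is to reduce Corollary \ref{cor3} to the parallel-bundle case (Corollary \ref{cor2}) by flattening each of the two normal families via a single reflection given by Levi-Civita's theorem. First I would invoke Levi-Civita's theorem twice: a single mirror $M_1$ reflects the first normal family into a parallel bundle $P_1$, inducing a diffeomorphism $\phi : \Sigma_1 \to D_1$ onto the cross section $D_1$; symmetrically, a single mirror $M_2$ reflects a parallel bundle $P_2$ (cross section $D_2$) into the second normal family, inducing $\psi : D_2 \to \Sigma_2$. The residual task is then to realize the parallel-to-parallel diffeomorphism $g := \psi^{-1} \circ f \circ \phi^{-1} : D_1 \to D_2$ by mirrors placed between $M_1$ and $M_2$; by Corollary \ref{cor2}, this requires at most $6$ reflections, or only $4$ if $g$ is orientation reversing (Theorem \ref{t2}).

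In the case where the induced $g$ is orientation reversing, the total count is $1+4+1=6\le 7$, which already gives the claim. In the orientation preserving case, the naive count $1+6+1=8$ exceeds the stated bound and must be reduced by one. To this end, I would precompose $f$ with a single planar-mirror reflection $\rho$, writing $f = f^* \circ \rho$ where $f^* := f \circ \rho^{-1} : \rho(\Sigma_1) \to \Sigma_2$. A reflection in a single planar mirror reverses the orientation of oriented wave fronts (the ambient 3D reflection is orientation reversing, and restricting it to the 2D tangent spaces of the oriented incoming/outgoing wave fronts produces an orientation-reversing diffeomorphism). Consequently $f^*$ is orientation reversing, so the preceding construction applied to $f^*$ uses $6$ mirrors; adding the single mirror implementing $\rho$ yields a total of $1+6=7$ reflections for $f$.

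The main obstacle I anticipate is the orientation bookkeeping: one must verify carefully that, after choosing Levi-Civita mirrors $M_1^*, M_2^*$ for $f^*$, the resulting intermediate parallel-bundle map $g^* = (\psi^*)^{-1} \circ f^* \circ (\phi^*)^{-1}$ is indeed orientation reversing, so that Theorem \ref{t2} applies with only $4$ middle mirrors. A secondary (but standard) issue is placing all the mirrors from the three stages in $\RRR^3$ without creating spurious intersections with the light rays; this should be handled, as in the proofs of Lemma \ref{l1}, Proposition \ref{cor}, and Theorem \ref{t2}, by shifting the three groups of mirrors vertically into disjoint horizontal slabs along the $z$-axis so that each light ray touches each mirror in its prescribed group exactly once.
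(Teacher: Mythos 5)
Your proposal is correct and follows essentially the same route as the paper's (sketched) proof: one Levi-Civita mirror on each end to reduce $f$ to a plane diffeomorphism between vertical parallel beams, then Theorem \ref{t2} (4 reflections) in the orientation reversing case for a total of 6, and one extra flat, orientation-reversing mirror in the orientation preserving case for a total of 7. The only cosmetic difference is that you place the auxiliary flat mirror before the first Levi-Civita mirror, whereas the paper inserts it as the penultimate mirror acting on a parallel beam; the count and the idea are identical, and your explicit flagging of the orientation bookkeeping and of the vertical-slab separation of the mirror groups matches the level of detail of the paper's sketch.
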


\begin{proof} (Sketch) We use one mirror to transform each normal family to the family of upward oriented vertical rays (as in Levi-Civita's theorem). This reduces the situation to a local diffeomorphism of the horizontal plane. If this diffeomorphism is orientation reversing then we can realize it by four mirror reflection, and the total number of reflections is six.

If the plane diffeomorphism is orientation preserving, we  add one more (penultimate) mirror. The mirror is flat, and it transforms a parallel beam into another parallel beam, reversing the orientation. In this case, the total number of reflections is seven.
\end{proof}

\section{Open questions} \label{problems}

The problem of the minimal number of mirrors needed to realize a diffeomorphism of normal families of rays was not  studied in this paper. In particular, the statement of Corollary \ref{cor3} is hardly optimal.

We finish with several problems.

\begin{question}\label{q1}
What is the minimal number of  mirror reflections needed to realize a diffeomorphism of two bounded domains in $\RRR^2$?
\end{question}

\begin{question}\label{q2}
The same question for local diffeomorphisms of two normal families of oriented lines in $\RRR^3$.
\end{question}

\begin{question}\label{q3}
Generalization of these questions to diffeomorphisms of domains in $\RRR^{n}$ and to diffeomorphisms of normal families in $\RRR^{n+1}$.
\end{question}

\begin{question}\label{q4}
Given a diffeomorphism $f$ of compact domains in $\RRR^n$, what is the least number of gradient diffeomorphisms whose composition is $f$?
\end{question}

\begin{question}\label{q5}
Consider the pencils of lines through two points (perhaps, coinciding) in $\RRR^2$. Which mappings between these pencils can be realized by 2-mirror reflections? Same question in  $\RRR^{n+1}$
\end{question}

\section*{Acknowledgements}

We are grateful to C. Croke, A. Glutsyuk, M. Levi, R. Perline, and B. Sleeman for stimulating discussions.
Example \ref{primer} is due to A. Glutsyuk.

The first  author was supported by Portuguese funds through CIDMA - Center for Research and Development in Mathematics and Applications, and FCT - Portuguese Foundation for Science and Technology, within the project UID/MAT/04106/2013, as well as by the FCT research project PTDC/MAT/113470/2009. The second author was supported by the NSF grants DMS-1105442 and DMS-1510055. The third author was supported by the grant RFBR 15-01-03747

\section*{Appendix}

Here we prove that the reflection in an ellipse, considered as a mapping of the pencil of oriented lines through one focus to the pencil of oriented lines through another focus, is a M\"obius transformation.

Let $A$ and $B$ be the foci, and $C$ be a point of the ellipse. By scaling, assume that $|AC|+|CB|=2$, and let $|AB|=2c$. Set $|AC|=v, |BC|=u$, and denote the angles $BAC$ and $ABC$ by $\alpha$ and $\beta$ (see Fig. \ref{figTr}). Then we have a system of equations
\begin{figure}[h]
\begin{picture}(0,73)
\rput(6.7,0.1){
\scalebox{1.15}{
\pspolygon[linecolor=blue](-2,0)(0.5,2)(2,0)
\rput(-2.25,-0.1){\scalebox{0.87}{$A$}}
\rput(2.18,-0.1){\scalebox{0.87}{$B$}}
\rput(0.65,2.2){\scalebox{0.87}{$C$}}
\rput(-1.02,1.1){\scalebox{0.87}{$v$}}
\rput(1.45,1.1){\scalebox{0.87}{$u$}}
\rput(-1.4,0.22){\scalebox{0.87}{$\al$}}
\rput(1.38,0.27){\scalebox{0.87}{$\bt$}}
}
}
\end{picture}
\caption{Reflection in an ellipse.}
\label{figTr}
\end{figure}
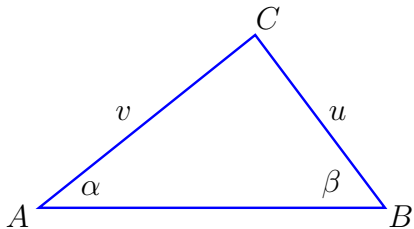
$$
v\sin\alpha=u\sin\beta,\ u+v=2,\ v\cos\alpha+u\cos\beta=2c.
$$
Eliminating $u$ and $v$, we obtain
\begin{equation} \label{sines}
\sin(\alpha+\beta)=c(\sin\alpha+\sin\beta).
\end{equation}

We identify the circles centered at $A$ and $B$ with the projective line via stereographic projections. Let $x=\tan(\alpha/2),\ y=\tan(\beta/2)$ be the respective coordinates in the projective line. Express sines and cosines in (\ref{sines}) in terms of $x$ and $y$:
$$
\frac{2x(1-y^2)}{(1+x^2)(1+y^2)} + \frac{2y(1-x^2)}{(1+x^2)(1+y^2)} = c\left(\frac{2x}{1+x^2} + \frac{2y}{1+y^2}\right)
$$
or, equivalently,
$$
x(1-y^2)+y(1-x^2)=c[x(1+y^2)+y(1+x^2)].
$$
Canceling $x+y$, we find that
$$
y=\left(\frac{1-c}{1+c}\right) \frac{1}{x},
$$
a fractional-linear transformation.

\end{document}